\newtheorem{lemma}{Lemma}[section]
\newtheorem{theorem}[lemma]{Theorem}
\newtheorem{remark}[lemma]{Remark}
\newtheorem{proposition}[lemma]{Proposition}
\newtheorem{convention}[lemma]{Convention}
\numberwithin{equation}{section}
\title{\textsf{Superderivations for Modular Graded Lie  Superalgebras  of Cartan-type}}
\author{\textsc{Wei Bai}\;  \textsc{and} \textsc{Wende Liu}
\footnote{Correspondence: wendeliu@ustc.edu.cn (W. Liu)} \setcounter{footnote}{-1} \footnote{Supported by the NSF of China (10871057)
and the NSF of HLJP, China (A200802)} \\
  \\
{\small\textit{ School of Mathematical Sciences},
\textit{Harbin Normal University}} \\
{\small\textit{Harbin 150025, China}}
 }
\date{ }
\begin{document}
\maketitle
\begin{quotation}
\noindent\textbf{Abstract}  Superderivations for
  the eight families of finite or infinite dimensional graded Lie superalgebras of Cartan-type
over a field of characteristic $p>3$ are completely determined by a uniform approach: The infinite dimensional case is  reduced to the finite dimensional case and the latter is further reduced to the restrictedness case, which proves to be far more  manageable. In particular, the outer
superderivation algebras  of those Lie superalgebras are completely determined.\\
\noindent\textbf{Keywords}\ \ Lie  superalgebra; Cartan-type;
 superderivation

\noindent \textbf{Mathematics Subject Classification 2000}: 17B50, 17B40
  \end{quotation}

  \setcounter{section}{-1}

\section{Introduction}

Eight families of  $\mathbb{Z}$-graded Lie superalgebras of Cartan-type
were constructed  over a field of  characteristic $p>3$
  \cite{Fu-Zhang-Jiang,Liu-He,Liu-Zhang-Wang,Liu-Yuan-1,Zhang}. These Lie superalgebras are  subalgebras of the full
 superderivation algebras  of the associative superalgebras---tensor products  of the  divided power algebras and the
 exterior superalgebras. The superderivation
 algebras were  studied in one-by-one fashion for the \textit{finite dimensional and simple} ones \cite{Fu-Zhang-Jiang,Ma-Zhang,Liu-Zhang-Wang,Wang-Zhang,Zhang-Zhang}.
The present paper aims to use a uniform method to determine the surperderivation algebras of all the eight families of  graded Lie superalgebras of Cartan-type, including the \textit{infinite dimensional or non-simple} ones. In particular, the outer
superderivation algebras  of those Lie superalgebras are completely determined.
 We should mention that we adopt a method  for Lie algebras \cite[Lemma 6.1.3 ]{Strade} and benefit much from reading \cite{Strade,Strade-Farnsteiner}. It should be also mentioned that the present paper covers some known results about superderivations for the finite dimensional simple graded Lie superalgebras of Cartan-type mentioned above \cite{Fu-Zhang-Jiang,Ma-Zhang,Liu-Zhang-Wang,Wang-Zhang,Zhang-Zhang} and certain inaccuracies in the literature are corrected.

Throughout  $\mathbb{F}$ is an algebraically
closed field  of characteristic $p>3,$ $\mathbb{Z}_2:= \{\bar{0},\bar{1}\}$ is the field of two elements. As in usual, $\mathbb{Z},$ $\mathbb{N}$ and $\mathbb{N}_{0}$ are
the sets of integers, nonnegative integers and positive integers,
respectively. For a $\mathbb{Z}_2$-graded vector space $V$, denote by $|x|=\alpha$
the \textit{parity of a homogeneous element} $x\in V_{\alpha}$, $\alpha\in\mathbb{Z}_2$. If $V$
is a $\mathbb{Z}$-graded vector space and $x \in  V$ is a $\mathbb{Z}$-homogeneous element, write $\mathrm{zd}(x)$  for
the $\mathbb{Z}$-degree of $x$. The symbol  $|x|$ (resp. $\mathrm{zd}(x)$) always implies that $x$ is
a $\mathbb{Z}_2$-(resp. $\mathbb{Z}$-)homogeneous element.
\section{Basics}

 Fix two positive integers  $m$ and $n>1$. Let $\mathcal{O}(m)$ be the  \textit{divided power algebra} over $\mathbb{F}$ with
basis $\{x^{(\alpha)}\mid \alpha\in \mathbb{N}^{m}\} $ and $\Lambda (n)$  the
\textit{exterior superalgebra} over $\mathbb{F}$ with $n$ variables
$x_{m+1},\ldots ,x_{m+n}.$  The tensor product
$\mathcal{O}(m,n)
:=\mathcal{O}(m)\otimes_{\mathbb{F}} \Lambda(n)$
 is a super-commutative associative
superalgebra in the usual way.  For $g\in
\mathcal{O}(m),$ $f\in \Lambda(n),$   write $gf $ for
$ g\otimes f$.  Fix  two $m$-tuples of positive
integers $ \underline{t}:=\left( t_{1},t_{2},\ldots,t_m\right) $ and
$\pi:=\left( \pi _1,\pi _2,\ldots,\pi _m\right), $ where $\pi
_i:=p^{t_i}-1.$ The divided power algebra $\mathcal{O}(m)$ contains a finite dimensional subalgebra $\mathcal{O}(m;\underline{t}):=\mathrm{span}_{\mathbb{F}}\{ x^{( \alpha ) }\mid \alpha \in
\mathbb{A}(m;\underline{t})\},$  where
$\mathbb{A}(m;\underline{t}):=\left\{ \alpha \in
\mathbb{N}^m\mid\alpha _i\leq \pi _i \right\}$.
In particular,  $\mathcal{O}(m,n)$ has a finite dimensional subalgebra
$\mathcal{O}(m,n;\underline{t})
:=\mathcal{O}(m;\underline{t})\otimes_{\mathbb{F}} \Lambda(n)$.

Let
 $u:=\langle i_1,i_2,\ldots,i_k\rangle$ be a $k$-\textit{shuffle}, that is,  a strictly increasing sequence of  $k$ integers between
  $m+1$ and $m+n$.
Write $x^u:=x_{i_1}x_{i_2}\cdots x_{i_k}$ and $|u|:=k.$
  Notice that we also  denote the set $\{i_1,i_2,\ldots
,i_k\}$ by the $k$-shuffle  $u$ itself.
The only $n$-shuffle is $\omega:=\langle m+1,\ldots, m+n\rangle$. For short, put
$\mathbf{I}_0:=\overline{1,m},$ $\mathbf{I}_{1}:=\overline{m+1,m+n}$
and $\mathbf{I}:=\overline{1,m+n}.$ For a proposition $P$, put
$\delta_{P}:=1 $ if $ P$  is true and $\delta_{P}:=0$ otherwise.
 For $\varepsilon
_i:=( \delta_{i1}, \ldots,\delta _{im}),$ we abbreviate
$x^{(\varepsilon _i)}$ to $x_i $ for $i\in \mathbf{I}_0.$ Let
$\partial_i$ be the Special superderivation of
$\mathcal{O}(m,n) $  such that
$\partial_i(x_j)=\delta_{ij}$ for $i, j\in \mathbf{I}$.

From now on, we adopt the convention $(m,n;\underline{\infty})=(m,n).$ For example, we have
$\mathcal{O}(m,n;\underline{\infty})=\mathcal{O}(m,n).$
Let us introduce  the eight families of $\mathbb{Z}$-graded Lie superalgebras of Cartan-type as follows \cite{Fu-Zhang-Jiang,Liu-He,Liu-Zhang-Wang,Liu-Yuan-1,Zhang}.
 In the below $t$ may be  $\infty$.
\begin{itemize}
\item[(1.1)]
The \textit{generalized Witt superalgebra}  $
W\left(m,n;\underline{t}\right)$ is spanned by all $f_r \partial_r,$
where $ f_r\in \mathcal{O}(m,n;\underline{t}),$ $r\in\mathbf{I}$.

\item[(1.2)]
Let $\mathrm{div}: W(m,n;\underline{t})\longrightarrow \mathcal{O}(m,n;\underline{t})$
be the  {divergence}, which is an even linear operator  such that
$\mathrm{div}(f\partial_k)=(-1)^{|\partial_k||f|}\partial_k(f)$ for all $ k\in\mathbf{I}.$
For $i, j\in \mathbf{I}$, let $D_{ij}:
\mathcal{O}(m,n;\underline{t})\longrightarrow W(m,n;\underline{t})$
be a linear operator  such that for $a\in\mathcal{O}(m,n;\underline{t}),$
 $D_{ij}(a):=(-1)^{|\partial_i||\partial_j|}\partial_i(a)\partial_j-(-1)^{(|\partial_i|+|\partial_j|)|a|}\partial_j(a)
\partial_i.
$
 The \textit{Special
superalgebra} is
\begin{itemize}
\item[]$S(m,n;\underline{t}):=\{D \in
W(m,n;\underline{t})\mid\mathrm{div}(D)=0\}.$
\end{itemize}
It is the derived algebra of
\begin{itemize}
\item[]$\overline{S}(m,n;\underline{t}):=\{D \in
W(m,n;\underline{t})\mid\mathrm{div}(D)\in \mathbb{F}\}. $
\end{itemize}
Moreover, the derived algebra of $S(m,n;\underline{t})$,
\begin{itemize}
\item[]$S(m,n;\underline{t})^{(1)}=\mathrm{span}_{\mathbb{F}}\{D_{ij}(a)\mid
a\in\mathcal{O}(m,n;\underline{t}),\, \, i, j\in \mathbf{I} \},$
\end{itemize}
 is a simple Lie superalgebra.
\end{itemize}
Write $m=2r$ or $2r+1$. Let $'$ be the involution of $\mathbf{I}$ such $i'=i+r$ for $i\in\overline{1, r}$ and $i'=i$ for $i\in\mathbf{I}_{1}.$ We also use the mapping $\sigma:\mathbf{I}\longrightarrow \{1, -1\}$ given by
$\sigma(i)=-1$ for $i\in \overline{r+1, 2r}$ and $\sigma(i)=1$ otherwise.
\begin{itemize}

\item[(1.3)]
Suppose $m=2r$ is even.
Let $D_{H}: \mathcal{O}(m,n;\underline{t})\longrightarrow
W(m,n;\underline{t})$ be an even linear operator  given by
$D_{H}(a):=\sum_{i\in
\mathbf{I}}\sigma(i)(-1)^{|\partial_i||a|}\partial_i(a)\partial_{i'}.$
The \textit{Hamiltonian superalgebra} is
\begin{itemize}
\item[]$H(m,n;\underline{t})=\mathrm{span}_{\mathbb{F}}\{D_{H}(a)\mid
a\in\mathcal{O}(m,n;\underline{t})\}.$
\end{itemize}
Its derived algebra is  simple.
While $H(m,n;\underline{t})$ is the derived algebra of the Lie superalgebra
\begin{itemize}
\item[]$\overline{H}(m,n;\underline{t}):=\overline{H}(m,n;\underline{t})_{\bar0}\oplus
\overline{H}(m,n;\underline{t})_{\bar1},$
\end{itemize}
 where for $\alpha \in \mathbb{Z }_2,$
\begin{eqnarray*}\begin{split}
\overline{H}(m,n;\underline{t})_{\alpha}=&\bigg
\{\sum_{i\in \mathbf{I}} a_i\partial_i\in
  W(m,n;\underline{t})_{\alpha}\bigg|\\
 &\partial_i(a_{j'})=(-1)^{|\partial_i||\partial_j|+(|\partial_i|+|\partial_j|)\alpha}\sigma(i)\sigma(j)\partial_j(a_{i'}), i, j \in
 \mathbf{I}\bigg\}.\end{split}
\end{eqnarray*}
Write $\bar {\mathcal{O}}(m,n;\underline{t}) $ for the quotient
superspace $\mathcal{O}(m,n;\underline{t})/\mathbb{F}\cdot 1$ and
view $D_H$ as the linear operator of $\bar {\mathcal{O}}(m,n;\underline{t})$. One sees
that $H(m,n;\underline{t})\cong(\bar
{\mathcal{O}}(m,n;\underline{t}), [\;,\;]_{H})$, where the
bracket is:
 $ [a,b]_{H}:= D_H\left(a\right)\left(b\right)$ for
   $a,b\in\bar{\mathcal{O}}(m,n;\underline{t}).$

\item[(1.4)]
Suppose $m=2r+1$ is odd. The \textit{contact superalgebra} is by definition
\begin{itemize}
\item[]$K(m,n;\underline{t}):=\mathrm{span}_{\mathbb{F}}\{D_K(a)\mid a\in\mathcal{O}(m,n;\underline{t})\}.$
\end{itemize}
$$D_{K}(a):=\sum_{i\in \mathbf{I}\backslash\{m\}}(-1)^{|\partial_i||a|}\Big(x_i\partial_m(a)+\sigma(i')\partial_{i'}(a)\Big)\partial_i
+\Big(2a-\sum_{i\in \mathbf{I}\backslash\{m\}}x_i\partial_i(a)\Big)\partial_m.$$
We have a Lie superalgebra isomorphism
\begin{itemize}
\item[]$K(m,n;\underline{t})\cong(\mathcal{O}(m,n;\underline{t}),[\;,\;]_K),$
\end{itemize}
where the Lie bracket is:  $ [a,b]_K=
D_K(a)\left(b\right)-2\partial_m(f)(g)$. Note that
 $K(m,n;\underline{t})^{(1)}$ is  simple.

\end{itemize}

In the below, we introduce  the other four families of Lie superalgebras of Cartan-type. In these cases, suppose $m>2$
and $n=m$ or $m+1$. Let $\tilde{}$ be the involution of $\textbf{I}$ such that $\tilde{i}= i+m$ for $
i\in\mathbf{I}_{0}$.
When $n=m$, from \cite{Liu-He,Liu-Zhang-Wang} we have the following two families  of Lie superalgebras.
\begin{itemize}
\item[(1.5)]
Define an odd linear operator
$T_H:\mathcal{O}(m,m;\underline{t}) \longrightarrow  W(m,m;\underline{t}) $ such that
 $T_H(a):=\sum_{i\in  \mathbf{I}}(-1)^{|\partial_i||a|}\partial_i(a)\partial_{\tilde{i}}$
for $a\in \mathcal{O}(m,m;\underline{t}).$
The \textit{odd Hamiltonian superalgebra} is
\begin{itemize}
\item[]$ HO(m;\underline{t}):=\mathrm{span}_{\mathbb{F}}\{T_{H}(a)\mid a\in\mathcal{O}(m,m;\underline{t})\},$
\end{itemize}
 which is  simple.
 It is the derived algebra of the Lie superalgebra
 \begin{itemize}
\item[]$\overline{HO}(m;\underline{t}):=\overline{HO}(m;\underline{t})_{\bar0}\oplus
\overline{HO}(m;\underline{t})_{\bar1},$
\end{itemize}
 where for $\alpha \in \mathbb{Z }_2,$
\begin{eqnarray*}\begin{split}
\overline{HO}(m;\underline{t})_{\alpha}=&\bigg \{\sum_{i\in \mathbf{I}} a_i\partial_i\in
  W(m,m;\underline{t})_{\alpha}\bigg|\\
 &\partial_i(a_{\tilde{j}})=(-1)^{|\partial_i||\partial_j|+(|\partial_i|+|\partial_j|)(\alpha+\bar{1})}\partial_j(a_{\tilde{i}}), i, j \in
 \mathbf{I}\bigg\}.\end{split}
\end{eqnarray*}
We have a Lie superalgebra isomorphism $HO(m;\underline{t})\cong(\bar
{\mathcal{O}}(m,m;\underline{t}), [\;,\;]_{HO})$, where the Lie
bracket is,  $[a,b]_{HO}:={T_H}\left(a\right)\left(b\right)$ for
$a,b\in\bar{\mathcal{O}}(m,m;\underline{t}).$
\item[(1.6)] The \textit{Special odd Hamiltonian superalgebra} is
 $SHO(m;\underline{t}):=S(m,m;\underline{t})\cap HO(m;\underline{t})$.
Its second  derived superalgebra is simple.  Put
$\overline{SHO}(m;\underline{t}):=\overline{S}(m,m;\underline{t})\cap\overline{HO}(m;\underline{t}).$
\end{itemize}
When $n=m+1$,  from  \cite{Fu-Zhang-Jiang,Liu-Yuan-1}  we have the following
two families of Lie superalgebras.
\begin{itemize}
\item[(1.7)]
The \textit{odd Contact superalgebra} is
\begin{itemize}
\item[]$KO(m;\underline{t}):=\mathrm{span}_{\mathbb{F}}\{D_{KO}(a)\mid a\in\mathcal{O}(m,m+1;\underline{t})\},$
\end{itemize}
where $D_{KO}:\mathcal{O}(m,m+1;\underline{t}) \longrightarrow  W(m,m+1;\underline{t})$ is  given by
\begin{itemize}
\item[]$D_{KO}(a):
=T_{H}(a)+(-1)^{|a|}\partial_{2m+1}(a)\mathfrak{D}+
\big(\mathfrak{D}(a)-2a \big)\partial_{2m+1}.$
\end{itemize}
Hereafter, $\mathfrak{D}:=\sum
_{i=1}^{2m}x_{i}\partial_{i}. $
 Note that $KO(m;\underline{t})$ is  simple and
  $KO(m;\underline{t})\cong(\mathcal{O}(m,m+1;\underline{t}),[\;,\;]_{KO})$,
where the bracket is
\begin{equation*}
[a,b]_{KO}=
D_{KO}\left(a\right)\left(b\right)-(-1)^{|a|}2\partial_{2m+1}\left(a\right)b\quad
\mbox{for}\; a,b\in \mathcal{O}(m,m+1;\underline{t}).
\end{equation*}
\item[(1.8)]
 Given $\lambda\in \mathbb{F},$ for $a\in \mathcal{O}(m,m+1;\underline{t})$ $m>3$, consider the linear operator $\mathrm{div}_{\lambda}$:
$$
\mathrm{div}_{\lambda}(a):=(-1)^{|a|}2\left(\sum_{i=1}^{m}\partial_i\partial_{\tilde{i}}
\left(a\right)+\left(\mathfrak{D}-m\lambda\mathrm{id}_{\mathcal{O}(m,m+1;\underline{t})}\right)
\partial_{2m+1}\left(a\right)\right).$$

The  kernel of $\mathrm{div}_{\lambda}$ is called the \textit{Special odd Contact superalgebra}, denoted by
$SKO(m;\underline{t})$. Its second derived algebra is simple.
\end{itemize}


\begin{convention}\label{ctc101120} Hereafter $X$ denotes  $W$, $S$, $H$, $K$,
$HO$, $SHO$, $KO$ or $SKO$. For simplicity we usually write  $X(\underline{t})$  for $X(m,n;\underline{t})$ $(X=W, S, H$ or $K)$ and $X(m;\underline{t})$ $(X=HO, SHO, KO$ or $SKO)$, where $t$ is $\infty$ or not.
It is convenience to  identify   $X(\underline{t})$ with a  finite dimensional
  subalgebra of  $X(\underline{\infty})$ for $t\not= {\infty}$.
  \end{convention}

 $X(\underline{t})$ and its derived algebras are referred to as the \textit{graded Lie superalgebras of Cartan-type}.

  \begin{remark}\label{ctr110310}  When $\underline{t}=\underline{\infty}$,
  $S(\underline{t})$,   $H(\underline{t})$,  $K(\underline{t})$,  $SHO(\underline{t})^{(1)}$ and $SKO(\underline{t})^{(1)}$
  are simple.
\end{remark}

\section{Reduction}

In this section we
establish some technical lemmas to simplify our consideration. Propositions \ref{ct101120l2.1} and \ref{ct11012110582.2}
  play an  important role for  determining the superderivations of   Lie superalgebras of Cartan-type.
  For later use we first list the heights of the graded Lie superalgebras of Cartan type.

  \begin{remark}\label{ctr101120}   $X(\underline{t})$ has a \textit{principal  grading}
 satisfying that
$$\mathrm{zd}(x_i)=-\mathrm{zd}(\partial_i)=  1+\delta_{X=K}\delta_{i=m}+\delta_{X=KO}\delta_{i=2m+1}+\delta_{X=SKO}\delta_{i=2m+1}.$$
Let $h(X)$ denote the height of
$X(\underline{t})^{(2)}$ and put
$\xi(\underline{t}):=\sum_{i=1}^mp^{t_i}-m+n.$  \\

   ~~~~~~~~~~\begin{tabular}{|l|l|}
   \multicolumn{2}{c}{$\rm{Heights\; of\; Lie\; superalgebras\; of\; Cartan\; type}$}

    \\[5pt] \hline
     \multicolumn{1}{|c|}{$\mathrm{Height}$ $h(X)$}&
     \multicolumn{1}{|c|}{$\mathrm{Lie\; superalgebra}$  $X$}\\
     \hline
     $\xi(\underline{t})-1$ & $W, \, KO$  \\
     \hline
     $\xi(\underline{t})-2$ & $S, \, HO$, $SKO $  $\mathrm{ with}$   $m\lambda+1\neq 0$  in $\mathbb{F}$ \\
     \hline
     $\xi(\underline{t})-3$ & $H, \,  SKO \; \mathrm{with} \; m\lambda+1= 0\;  \mbox{in}\; \mathbb{F}$ \\
     \hline
     $\xi(\underline{t})-5$ & $SHO$ \\
     \hline
     $\xi(\underline{t})+p^{t_m}-3$ & $K  \;\mathrm{with}\; n-m-3\neq 0 \;\mbox{ in } \mathbb{F}$ \\
     \hline
     $\xi(\underline{t})+p^{t_m}-4$ & $K  \; \mathrm{with} \; n-m-3= 0 \;\mbox{ in } \mathbb{F}$ \\
     \hline
   \end{tabular}
\end{remark}

Suppose   $L$ is a  Lie superalgebra and   $V$ is an $L$-module.
Denote by $\mathrm{Der}(L, V)$ the \textit{superderivation space}
 and
$\mathrm{Inder}(L, V)$ the \textit{inner derivation space}.   Clearly, $\mathrm{Der}(L, V)$ is an
$L$-submodule of $\mathrm{Hom}_{\mathbb{F}}(L, V)$.
Assume in addition that   $L=\bigoplus_{r\in\mathbb{Z}}L_r$ is
$\mathbb{Z}$-graded and finite-dimensional, and $V=\bigoplus_{r\in\mathbb{Z}}V_r$ is a
$\mathbb{Z}$-graded $L$-module. Then the superderivation space inherits a
$\mathbb{Z}$-graded $L$-module structure
\begin{itemize}
\item[]$\mathrm{Der}(L, V)=\bigoplus_{r\in\mathbb{Z}}\mathrm{Der}_r(L, V).$
\end{itemize}
As in the usual, write
\begin{itemize}
\item[]$\mathrm{Der}^-(L, V):=\mathrm{span}_{\mathbb{F}}\{\phi\in \mathrm{Der}_i(L, V)\mid i<0\}.$
\end{itemize}
Let $T\subset L_0\cap L_{\bar{0}}$ be a torus of $L$  with the weight
space decompositions:
\begin{itemize}
\item[]$L=\oplus_{\alpha\in \Theta}L_{(\alpha)},\quad  V=\oplus_{\beta\in \Delta}V_{(\beta)}.$
\end{itemize}
Then there exist subsets
$\Theta_{i}\subset\Theta$ and $\Delta_{j}\subset\Delta$ such that
$L_{i}=\oplus_{\alpha\in\Theta_{i}}L_{i}\cap L_{(\alpha)}$ and $V_{j}=\oplus_{\beta\in\Delta_{j}}V_{j}\cap V_{(\beta)}.$
Hence $L $ and $V$ have the corresponding $\mathbb{Z}\times T^{*}$-grading structures, respectively.
Of course $\mathrm{Der}(L, V)$ inherits a
$\mathbb{Z}\times T^{*}$-grading from $L$ and
$V$ as above.
A superderivation $\phi\in\mathrm{Der}(L, V)$ is  call a \textit{weight-derivation} if it is $T^*$-homogeneous. Write $\theta$ for the zero weight.
\begin{lemma}\label{obl201010171415}
A weight-derivation $\phi\in\mathrm{Der}(L, V)$ is inner if it is a
nonzero weight-derivation. In particular, any derivation
$\psi\in\mathrm{Der}(L, V)$  is inner  modulo a derivation of zero
weight-derivation.
\end{lemma}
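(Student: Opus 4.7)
The plan is to read the displayed decomposition of $\mathrm{Der}(L,V)$ into $T^{*}$-weight components and, given a weight-derivation $\phi$ of weight $\alpha\neq\theta$, write down an explicit $v\in V$ for which $\phi$ coincides with the inner derivation $\mathrm{ad}_{v}\colon x\mapsto(-1)^{|\phi||x|}\,x\cdot v$. The natural candidate is $v:=\alpha(t)^{-1}\phi(t)$ for any $t\in T$ with $\alpha(t)\neq 0$, which exists precisely because $\alpha\neq\theta$. Since $\phi(t)$ lives in $V_{(\alpha)}$ and has parity $|\phi|$, so does $v$.

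The verification is via the super Leibniz rule. Take a $T^{*}$-homogeneous $x\in L_{(\beta)}$ and apply $\phi$ to $[t,x]=\beta(t)x$. Because $t\in T$ is even, the Leibniz identity reads
\[
\phi([t,x])=t\cdot\phi(x)-(-1)^{|\phi||x|}\,x\cdot\phi(t).
\]
On $\phi(x)\in V_{(\alpha+\beta)}$ the torus element $t$ acts by the scalar $(\alpha+\beta)(t)$, so substituting yields
\[
\beta(t)\phi(x)=(\alpha(t)+\beta(t))\phi(x)-(-1)^{|\phi||x|}\,x\cdot\phi(t),
\]
that is, $\alpha(t)\phi(x)=(-1)^{|\phi||x|}\,x\cdot\phi(t)$. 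Dividing by $\alpha(t)$ gives $\phi(x)=(-1)^{|\phi||x|}\,x\cdot v$, which is exactly $\mathrm{ad}_{v}(x)$. Since both sides are additive and every element of $L$ is a sum of $T^{*}$-homogeneous vectors, $\phi=\mathrm{ad}_{v}\in\mathrm{Inder}(L,V)$.

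For the second assertion, I decompose an arbitrary $\psi\in\mathrm{Der}(L,V)$ along the $T^{*}$-grading as $\psi=\sum_{\gamma\in T^{*}}\psi_{\gamma}$; the first part says each summand with $\gamma\neq\theta$ is inner, so modulo inner derivations $\psi$ reduces to the zero-weight component $\psi_{\theta}$, as claimed.

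The only real obstacle is the sign bookkeeping: one must be careful that $t$ is genuinely even (so the Leibniz rule carries no sign on $t$), that the action of $t$ on the shifted weight space $V_{(\alpha+\beta)}$ gives $(\alpha+\beta)(t)$ rather than $\beta(t)$, and that the super-commutativity $\phi(t)\cdot x=-(-1)^{|\phi||x|}x\cdot\phi(t)$ is applied with the parity of $\phi(t)$ equal to $|\phi|$. A slip in any of these would give the wrong sign in the final formula and destroy the identification with an inner derivation.
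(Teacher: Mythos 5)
Your proof is correct and follows essentially the same route as the paper: choose $t\in T$ with $\alpha(t)\neq 0$, apply the super Leibniz rule to $[t,x]=\beta(t)x$, and conclude $\phi=\mathrm{ad}\big(\alpha(t)^{-1}\phi(t)\big)$, then split a general $\psi$ into $T^{*}$-weight components for the second assertion. If anything, you are more careful than the paper's own computation, which suppresses the sign $(-1)^{|\phi||x|}$ in the identity $\alpha(t)\phi(x)=x\cdot\phi(t)$ and inserts an inessential intermediate $\mathbb{Z}$-graded decomposition $\phi=\sum_i\phi_i$.
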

\begin{proof}
Suppose  $\phi\in \mathrm{Der}_{(\alpha)}(L, V)$ and $\alpha\not=\theta$.
Since $\mathrm{Der}_{(\alpha)}(L, V)$ is $\mathbb{Z}$-graded, write $\phi=\sum_{i\in\mathbb{Z}}\phi_i$, where  $\phi_i\in \mathrm{Der}_{(\alpha)}(L, V)$.
Then there exists $t \in T$ with $\alpha(t)\neq 0$ such that for
arbitrary $x \in L$,
\begin{itemize}
\item[]$\alpha(t)\phi_{i}(x)=(t\cdot\phi_{i})(x)=t\cdot\big(\phi_{i}(x)\big)-\phi_{i}\big([t, x]\big)=x\cdot\big(\phi_{i}(t)\big).$
\end{itemize}
Hence $\phi_{i}(x)=x\cdot\big(\alpha(t)^{-1}\phi_i(t)\big)$, which implies that $\phi_{i}$ is inner.
Then  $\phi$ is inner.
\end{proof}

Analogous to \cite[Proposition 3.3.5 and Lemma 4.7.1]{Strade-Farnsteiner}, we have
\begin{lemma}
\label{ct101120l2.2} Let $L=\oplus_{i=-r}^{h}L_i$ be a
finite dimensional $\mathbb{Z}$-graded simple Lie superalgebra. The following statements hold:
\begin{itemize}
\item[$\mathrm{(1)}$]$L_{-r}$ and $L_h$ are irreducible $L_0$-modules.

\item[$\mathrm{(2)}$] $[L_0, L_h]=L_h,$ \quad $[L_0, L_{-r}]=L_{-r}.$

\item[$\mathrm{(3)}$] $C_{L_{h-1}}(L_1)=0,$ \quad $[L_{h-1}, L_1]=L_h.$

\item[$\mathrm{(4)}$] $C_L{(\oplus_{i>0}L_i)}=L_h,$ \quad $C_L{(\oplus_{i<0}L_i)}=L_{-r}.$
\end{itemize}
 If $M\subset L$ is a subalgebra containing $L_{-1}\oplus L_1$ and if $M\cap L_{h-1}\not=0$,
 then $M=L$.\qed
\end{lemma}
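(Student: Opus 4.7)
The plan is to adapt the blueprint of \cite[Proposition 3.3.5 and Lemma 4.7.1]{Strade-Farnsteiner} to the super setting, with standard sign modifications. The workhorse for parts (1), (3), and (4) is the following \emph{ideal-generation} argument: for a homogeneous $x \in L$ with $[L_{>0}, x] = 0$, PBW shows that the Lie ideal $(x)_L$ equals $U(L_{\leq 0}) \cdot x \subseteq \bigoplus_{k \leq \mathrm{zd}(x)} L_k$; if $\mathrm{zd}(x) < h$, this ideal is proper, forcing $x = 0$ by simplicity. Applying this: for (1), a nonzero $L_0$-submodule $N \subseteq L_h$ generates the ideal $U(L_{\leq 0}) \cdot N$ with top piece $N$, which must equal $L$ by simplicity, so $N = L_h$ (the case $L_{-r}$ is symmetric); for (3), any $x \in C_{L_{h-1}}(L_1)$ satisfies $[L_j, x] \subseteq L_{h-1+j} = 0$ for $j \geq 2$ by degree, hence $[L_{>0}, x] = 0$ and $x = 0$; for (4), any $x \in C_L(\bigoplus_{i>0} L_i) \cap L_j$ with $j < h$ vanishes for the same reason. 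The second halves of (3) and (4) follow readily: $[L_{h-1}, L_1]$ is an $L_0$-submodule of $L_h$ (by Jacobi), hence is $0$ or $L_h$ by (1), with $0$ excluded by the first half of (3).

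For (2), $[L_0, L_h]$ is an $L_0$-submodule of $L_h$, so equals $0$ or $L_h$ by (1). If it were $0$, then from $[[L_{-1}, L_1], L_h] = 0$, Jacobi (using $[L_1, L_h] = 0$) would yield $[L_1, [L_{-1}, L_h]] = 0$, so $[L_{-1}, L_h] \subseteq C_{L_{h-1}}(L_1) = 0$ by (3); iterating gives $[L_{\leq 0}, L_h] = 0$, and $L_h$ becomes a central ideal of $L$, contradicting simplicity. The statement for $L_{-r}$ is symmetric.

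For the final statement, pick a nonzero $y \in M \cap L_{h-1}$; by (3), $0 \neq [L_1, y] \subseteq M \cap L_h$. The plan is to show $M \cap L_h = L_h$ via irreducibility (1), then propagate downward through the grading by iterated bracketing with $L_{-1} \subseteq M$, invoking (4) to guarantee non-vanishing at each step, and analogously upward via $L_1$. The \textbf{main obstacle} is that $M \cap L_h$ is a priori only invariant under $M_0 = M \cap L_0 \supseteq [L_{-1}, L_1]$, not the full $L_0$, so (1) does not directly apply. This is bridged by first enlarging $M \cap L_{h-1}$ via iterated brackets $\mathrm{ad}(L_{-1})\mathrm{ad}(L_1)$ (all lying in $M$ by the subalgebra property) to obtain an $L_0$-stable subset of $L_{h-1}$, whose image under $[L_1,-]$ is then an $L_0$-submodule of $L_h$ contained in $M$, to which irreducibility (1) can be applied.
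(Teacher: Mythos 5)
The paper itself offers no proof of this lemma: it is stated with a q.e.d.\ box and justified only as ``analogous to'' \cite[Proposition 3.3.5 and Lemma 4.7.1]{Strade-Farnsteiner}, so your proposal can only be measured against the standard argument, not against anything in the text. Your ideal-generation engine --- for homogeneous $x$ with $[L_{>0},x]=0$ the ideal generated by $x$ equals $U(L_{\le 0})\cdot x\subseteq\bigoplus_{k\le \mathrm{zd}(x)}L_k$, hence vanishes when $\mathrm{zd}(x)<h$ --- is exactly the right device, and it does deliver (1), the first half of (3), and (4). Two smaller caveats: in (2), ``iterating gives $[L_{\le 0},L_h]=0$'' only reaches the subalgebra of $L_{<0}$ generated by $L_{-1}$, which the stated hypotheses do not guarantee to be all of $L_{<0}$; and in the second half of (3), ``$0$ is excluded'' presupposes $L_{h-1}\neq 0$. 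Both are repairable: once $[L_0,L_h]=[L_{-1},L_h]=0$, the degree-$(h-1)$ component of the ideal $U(L_{\le 0})\cdot L_h$ consists of monomials with exactly one factor from $L_{-1}$, which reorder into $U(L_0)\cdot[L_{-1},L_h]=0$, so simplicity would force $L_{h-1}=0$, and one concludes from there.

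The genuine gap is the final assertion. Your bridge does not close: iterating $\mathrm{ad}(L_{-1})\mathrm{ad}(L_1)$ on $M\cap L_{h-1}$ produces a subspace stable only under $\mathrm{ad}\bigl([L_{-1},L_1]\bigr)$, and $[L_{-1},L_1]$ need not equal $L_0$, so the irreducibility in (1) still cannot be invoked; likewise the downward propagation needs $[L_{-1},L_j]=L_{j-1}$, which (4) does not supply ((4) controls the centralizer of all of $\bigoplus_{i<0}L_i$, not of $L_{-1}$ alone). No bridge can exist at this level of generality: take $L=\mathfrak{sl}_3$ over $\mathbb{F}$, graded by assigning degree $1$ to $\alpha_1$ and degree $3$ to $\alpha_2$, and let $M=\mathfrak{h}\oplus\mathfrak{g}_{\alpha_1}\oplus\mathfrak{g}_{-\alpha_1}\oplus\mathfrak{g}_{\alpha_2}\oplus\mathfrak{g}_{\alpha_1+\alpha_2}$ be the standard parabolic. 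Then $h=4$, $M$ is a subalgebra containing $L_{-1}\oplus L_1=\mathfrak{g}_{-\alpha_1}\oplus\mathfrak{g}_{\alpha_1}$, $M\cap L_{h-1}=\mathfrak{g}_{\alpha_2}\neq 0$, and items (1)--(4) all hold, yet $M\neq L$. So the last claim is false for an arbitrary finite-dimensional graded simple Lie (super)algebra; any honest proof must import the extra structure of the Cartan-type algebras that the paper leaves implicit (that $L_{<0}$ is generated by $L_{-1}$, that $[L_{-1},L_j]=L_{j-1}$, and, in the actual applications, that $L_0\subseteq M$), and your write-up should state and use those hypotheses explicitly.
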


Analogous to
\cite[Lemma 2.1.3]{Liu-Zhang-1}, we have

\begin{lemma}\label{ct201101201245}
Let V be an arbitrary vector superspace over $\mathbb{F}$.  Suppose $A_1$, $A_2$,\ldots, $A_k\in \mathrm{End}_{\mathbb{F}}V$
  span an abelian sub-Lie superalgebra of $\frak{gl}(V)$. Suppose further each $A_i$ is generalized invertible,
that is, there is $B_i\in \mathrm{End}_{\mathbb{F}}V$ with $|B_i|=|A_i|$ such that

\begin{itemize}
\item[]$A_iB_iA_i=A_i, \, 1\leq i\leq k,$
\item[] $A_iB_j=(-1)^{|A_i||B_i|}B_jA_i, \, i\leq i\not=j\leq k.$
\end{itemize}
If $v_1, v_2, \ldots, v_k\in V$ satisfy:
\begin{itemize}
\item[]$A_iB_i(v_i)=v_i, \, 1\leq i\leq k,$
\item[] $A_i(v_j)=(-1)^{|A_i||A_j|}A_j(v_i), \, 1\leq i, j\leq k,$
\end{itemize}
then there exists $v\in V$ such that $A_i(v)=v_i$ for all $1\leq i\leq k.$\qed
\end{lemma}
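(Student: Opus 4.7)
The plan is to proceed by induction on $k$, reducing the problem at each step to adjusting a partial solution so as to incorporate one more equation $A_k(v) = v_k$ without disturbing the equations already satisfied.

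For the base case $k=1$, I would simply set $v := B_1(v_1)$; the hypothesis $A_1B_1(v_1) = v_1$ yields $A_1(v) = v_1$ immediately. For the inductive step, assume we have constructed $v' \in V$ with $A_i(v') = v_i$ for $1 \leq i \leq k-1$. The natural correction term is $w := B_k\bigl(v_k - A_k(v')\bigr)$, and I would then set $v := v' + w$.

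Next, I would verify the two families of identities. First, for the $k$-th equation, compute
\[
A_k(v) = A_k(v') + A_kB_k(v_k) - A_kB_kA_k(v') = A_k(v') + v_k - A_k(v') = v_k,
\]
using $A_kB_k(v_k) = v_k$ and $A_kB_kA_k = A_k$. Second, for $1 \leq i < k$, the crucial calculation pushes $A_i$ through $B_k$ via the super-commutation relation $A_iB_k = (-1)^{|A_i||B_k|}B_kA_i$, then invokes the abelian relation $A_iA_k = (-1)^{|A_i||A_k|}A_kA_i$, and finally uses the compatibility $A_i(v_k) = (-1)^{|A_i||A_k|}A_k(v_i)$ to rearrange
\[
A_i(w) = (-1)^{|A_i|(|B_k|+|A_k|)}B_kA_k\bigl(v_i - A_i(v')\bigr),
\]
which vanishes because $A_i(v') = v_i$ by the inductive hypothesis. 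Hence $A_i(v) = A_i(v') = v_i$ for $i < k$, completing the induction.

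The main obstacle, which is purely bookkeeping rather than conceptual, is tracking the superalgebraic signs carefully so that the phase picked up from passing $A_i$ past $B_k$ exactly matches the phase produced by rewriting $A_i(v_k)$ as $A_k(v_i)$; any mismatch would prevent the cancellation $v_i - A_i(v') = 0$ from completing the argument. A mild subtlety is that the paper's second hypothesis is stated as $A_iB_j = (-1)^{|A_i||B_i|}B_jA_i$, which I interpret as a typographical slip for $(-1)^{|A_i||B_j|}$ (noting $|B_j| = |A_j|$); under this reading the signs in the above computation balance as needed.
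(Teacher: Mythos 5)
Your proof is correct: the induction with correction term $v = v' + B_k\bigl(v_k - A_k(v')\bigr)$ is exactly the standard argument for such "simultaneous integration" lemmas, and your sign bookkeeping is sound (indeed, the overall phase picked up from commuting $A_i$ past $B_k$ is immaterial, since the vector inside, $v_i - A_i(v')$, vanishes by the inductive hypothesis before $B_k$ is ever applied). The paper itself gives no proof of this lemma --- it is stated with a \qed and deferred to the analogous Lemma 2.1.3 of the cited work of Liu and Zhang --- so your argument supplies the omitted details along the route the authors intended, and your reading of the exponent $|A_i||B_i|$ as a typo for $|A_i||B_j|$ is the right one.
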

For $i\in \mathbf{I}$, define a linear operator  $\Phi_i: {W}(\underline{t})\longrightarrow {W}(\underline{t})$, such that
\begin{itemize}
\item[]$\Phi_i(x^{(\alpha)}x^u\partial_j):= \left\{\begin{array}{lll} x^{(\alpha+\varepsilon_i)}x^u\partial_j, &\mbox{if}\;
i\in\mathbf{I}_0;
\\x^{(\alpha)}x_ix^u\partial_j, &\mbox{if}\; i\in \mathbf{I}_1.
\end{array}\right.$
\end{itemize}
When $\underline{t}\not=\underline{\infty}$ we adopt the convention that $x^{(\alpha+\varepsilon_i)}=0$ whenever $\alpha+\varepsilon_i\not\in\mathbb{A}(m;\underline{t}).$
Clearly, $\Phi_i $ is of $\mathbb{Z}$-degree 1 and $|\Phi_i|=|\partial_i|$. An element $D\in W(\underline{t})$
is called $i$-\textit{integral} provided that $\partial_i\Phi_i(D)=D.$

\begin{lemma}\label{ct201101201402} Let
$L$ be a $\mathbb{Z}$-graded subalgebra of ${W}(\underline{t})$ with depth $r$ such
that $ L_{{-r}}=\mathrm{span}_{\mathbb{F}}\{\partial_{j}\mid j\in
\mathbf{J}(k)\}$ for some $k\in \mathbf{I}$, where
$\mathbf{J}(k)=\{i_1, \ldots, i_k\}$ is the set of $k$ integers in $\mathbf{I}$.
  Then  $\phi(\partial_{j})$ is ${j}$-integral for any  $\phi\in \mathrm{Der}{\big(L, {W}(\underline{t})\big)}$. Furthermore, there exists $D\in {W}(\underline{t})$ such that
$\phi-\mathrm{ad}D$ vanishes on $L_{-r}$.
\end{lemma}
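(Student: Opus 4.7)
The plan is to reduce to the divided-power integration lemma (Lemma~\ref{ct201101201245}) by showing that the family $\{\psi_j:=\phi(\partial_j)\}_{j\in\mathbf{J}(k)}$ satisfies the compatibility and integrability conditions required there. Since the $\partial_j$'s super-commute in $W(\underline{t})$, I apply $\phi$ to $[\partial_i,\partial_j]=0$ and, using the super-Leibniz rule together with $[a,b]=-(-1)^{|a||b|}[b,a]$ and $[\partial_i,X]=\partial_i(X)$ for $X\in W(\underline{t})$, obtain the compatibility
\[
\partial_i\psi_j=(-1)^{|\phi|(|\partial_i|+|\partial_j|)+|\partial_i||\partial_j|}\,\partial_j\psi_i\qquad(i,j\in\mathbf{J}(k)).
\]
Specializing to $i=j\in\mathbf{I}_1$ the exponent forces a factor of $-1$, giving $2\partial_j\psi_j=0$ and hence $\partial_j\psi_j=0$. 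Since $\partial_j\psi_j=0$ for an odd index exactly says no $x_j$ appears in the coefficients of $\psi_j$, this is precisely the $j$-integrality $\partial_j\Phi_j\psi_j=\psi_j$ for $j\in\mathbf{I}_1$.

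For $j\in\mathbf{I}_0$ the self-pairing is tautological and a different argument is required. The plan is to combine the compatibility (for $i\neq j$ in $\mathbf{J}(k)$) with the nilpotency $(\mathrm{ad}\,\partial_j)^{p^{t_j}}=0$ on $W(\underline{t})$, inherited from $\partial_j^{p^{t_j}}=0$ on $\mathcal{O}(m,n;\underline{t})$, to show that the top slice $E_j:=\partial_j^{\pi_j}\psi_j$ is killed by $\partial_l$ for every $l\in\mathbf{J}(k)$; equivalently, $E_j$ involves only variables $x_l$ with $l\notin\mathbf{J}(k)$. To conclude $E_j=0$, which is the $j$-integrality of $\psi_j$ in the even case, I would first invoke Lemma~\ref{obl201010171415} to reduce to a zero-weight derivation: a non-zero weight-derivation is inner, whence $\psi_j=[D',\partial_j]\in\mathrm{range}(\partial_j)$ is $j$-integral at once. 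The remaining zero-weight case is then finished by a direct inspection of the $\mathbb{Z}$-graded pieces of $W(\underline{t})$ together with the strong form of $E_j$. This is the main obstacle, since unlike in the odd case no single Jacobi-type identity suffices.

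With $j$-integrality in hand for all $j\in\mathbf{J}(k)$, rescale $v_i:=(-1)^{|\phi||\partial_i|}\psi_i$; then the compatibility becomes $\partial_i v_j=(-1)^{|\partial_i||\partial_j|}\partial_j v_i$ and $j$-integrality becomes $\partial_i\Phi_i v_i=v_i$. A routine check on basis monomials $x^{(\alpha)}x^u\partial_l$ verifies that the pair $(\mathrm{ad}\,\partial_i,\Phi_i)_{i\in\mathbf{J}(k)}$ satisfies all hypotheses of Lemma~\ref{ct201101201245}: the super-commutation $\mathrm{ad}\,\partial_i\cdot\mathrm{ad}\,\partial_j=(-1)^{|\partial_i||\partial_j|}\mathrm{ad}\,\partial_j\cdot\mathrm{ad}\,\partial_i$ (since $[\partial_i,\partial_j]=0$), the super-commutation $\mathrm{ad}\,\partial_i\cdot\Phi_j=(-1)^{|\partial_i||\partial_j|}\Phi_j\cdot\mathrm{ad}\,\partial_i$ for $i\neq j$, and the generalized invertibility $\partial_i\Phi_i\partial_i=\partial_i$ (because $\partial_i\Phi_i=\mathrm{id}-P_i$ and $P_i\partial_i=0$, where $P_i$ is the overflow projection onto the $\alpha_i=\pi_i$ part). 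The lemma then produces $v\in W(\underline{t})$ with $\partial_i v=v_i$ for all $i\in\mathbf{J}(k)$; note $|v|=|\phi|$. Setting $D:=-v$, a short sign chase gives $[D,\partial_j]=(-1)^{|\phi||\partial_j|}\partial_j v=(-1)^{2|\phi||\partial_j|}\psi_j=\psi_j=\phi(\partial_j)$, so $(\phi-\mathrm{ad}\,D)$ vanishes on $L_{-r}$, as required.
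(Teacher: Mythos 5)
Your overall architecture matches the paper's: establish $j$-integrality of $\phi(\partial_j)$ for each $j$, then feed the rescaled family $v_j=(-1)^{|\phi||\partial_j|}\phi(\partial_j)$ together with $A_j=\mathrm{ad}\,\partial_j$, $B_j=\Phi_j$ into Lemma \ref{ct201101201245} to produce $D$. Your treatment of the odd indices (deriving $\partial_j\phi(\partial_j)=0$ from $[\partial_j,\partial_j]=0$) and your final assembly are correct and coincide with the paper's proof.

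The genuine gap is in the even-index integrality for $\underline{t}\neq\underline{\infty}$, which you yourself flag as ``the main obstacle'' and then do not close. Showing that $E_j:=\partial_j^{\pi_j}\phi(\partial_j)$ is annihilated by every $\partial_l$ with $l\in\mathbf{J}(k)$ is far from showing $E_j=0$: constant vector fields $\sum c_s\partial_s$ and anything built from variables outside $\mathbf{J}(k)$ are killed by all these operators, and a term $c\,x^{(\pi_j\varepsilon_j)}\partial_s$ in $\phi(\partial_j)$ (which is exactly the obstruction to $j$-integrality) would produce such a nonzero $E_j$. The weight reduction via Lemma \ref{obl201010171415} does not rescue this: first, that lemma needs a torus $T\subset L_0\cap L_{\bar 0}$, which the hypotheses on $L$ here do not supply; second, even for a zero-weight $\phi$ the weight constraint only forces $\alpha_j\equiv -1\pmod p$ in the monomials of $\phi(\partial_j)$ with $\partial_s$, $s\neq j$, and $\alpha_j=\pi_j=p^{t_j}-1$ satisfies this congruence, so the offending terms are not excluded and ``direct inspection'' has nothing to inspect them away with. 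The paper's argument supplies precisely the missing idea: since $\mathrm{Der}\big(W(\underline{t})\big)$ is restricted, the Jacobson formula gives
\begin{equation*}
\mathrm{ad}\Big((\mathrm{ad}\,\partial_j)^{p^{t_j}-1}\phi(\partial_j)\Big)=\big[\phi,\ (\mathrm{ad}\,\partial_j)^{p^{t_j}}\big]=0,
\end{equation*}
and centerlessness (simplicity) of $W(\underline{t})$ then forces $(\mathrm{ad}\,\partial_j)^{p^{t_j}-1}\phi(\partial_j)=0$, which is exactly $E_j=0$ and hence $j$-integrality. You should replace your even-case paragraph by this restrictedness argument (and note separately that for $\underline{t}=\underline{\infty}$ every element of $W(\underline{\infty})$ is trivially $j$-integral for $j\in\mathbf{I}_0$, a case your $E_j$ formalism does not cover).
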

\begin{proof}
For ${j}\in \mathbf{I}_1$, we have $
 [\partial_{j}, \phi(\partial_{j})]=0$ and $\phi(\partial_{j})$ is  ${j}$-integral.
Suppose ${j}\in \mathbf{I}_0$. It is clear that
  the elements of ${W}(\underline{\infty})$ are ${j}$-integral for all ${j}\in \mathbf{I}_0$.
For $\underline{t}\not=\underline{\infty}$,  $\mathrm{Der}(W)$ is a
restricted Lie superalgebra with respect to the $p$-power and
consequently,
\begin{itemize}
\item[]$\mathrm{ad}\big((\mathrm{ad}\partial_{j})^{p^{t_{j}}-1}\phi(\partial_{j})\big)=[\phi, (\mathrm{ad}\partial_{j})^{p^{t_{j}}}]=0.
$
\end{itemize}
Since ${W}(\underline{t})$ is simple, we have
$(\mathrm{ad}\partial_{j})^{p^{t_{j}}-1}\phi(\partial_{j})=0$
and then
  $\phi(\partial_{j})$ is ${j}$-integral.
Without loss of generality one may assume that $\phi$ is
$\mathbb{Z}_2$-homogeneous. For $\partial_{j},
\partial_{s}\in L_{-r}$, since $[\partial_{j},
\partial_{s}]=0$, it follows that
\begin{itemize}
\item[]$\mathrm{ad}\partial_{j}\big((-1)^{|\phi||\partial_{s}|}\phi(\partial_{s})\big)=
(-1)^{|\partial_{j}||\partial_{s}|}
\mathrm{ad}\partial_{s}\big((-1)^{|\phi||\partial_{j}|}\phi(\partial_{j})\big)$
\end{itemize}
Put $V=W(\underline{s})$, $A_j=\mathrm{ad}\partial_{j}$,
$B_j=\Phi_j$ and
$v_j=(-1)^{|\phi||\partial_{j}|}\phi(\partial_{j})$.
 Form Lemma \ref{ct201101201245},  we can find $D\in {W}(\underline{s})$ such that
$\phi-\mathrm{ad}D$ vanishes on $L_{-r}$.
\end{proof}

For short, write $L^-$ for $\sum_{i<0}L_{i}$ when $L$ is a $\mathbb{Z}$-graded Lie superalgebra.

\begin{proposition}\label{ct101120l2.1}
For any $\phi \in \mathrm{Der}\big(X(\underline{t}),
{W}(\underline{t})\big)$, there exists $D\in {W}(\underline{t})$ such
that $\phi-\mathrm{ad}D$ vanishes on $X^{-}(\underline{t})$. If $X(\underline{t})$ is finite dimensional and $\phi \in
\mathrm{Der}_k\big(X(\underline{t}), {W}(\underline{t})\big)$ for $k\geq
-r+1$, where $r$ is the depth of
$X(\underline{t})$, then there exists $D\in {W}(\underline{t})_k$
such that $\phi-\mathrm{ad}D$ vanishes on $X(\underline{t})$.

\end{proposition}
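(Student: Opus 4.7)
The strategy proceeds in two stages: use Lemma~\ref{ct201101201402} to kill $\phi$ on the depth-most piece $X(\underline{t})_{-r}$, extend this to all of $X^-(\underline{t})$, and then (in the finite-dimensional homogeneous case) propagate vanishing to all of $X(\underline{t})$.

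By Remark~\ref{ctr101120} the component $X(\underline{t})_{-r}$ is spanned by $\partial_j$'s: all of $\{\partial_j : j \in \mathbf{I}\}$ when $X\in\{W,S,H,HO,SHO\}$ (depth $r=1$), or the single $\partial_m$ or $\partial_{2m+1}$ when $X\in\{K,KO,SKO\}$ (depth $r=2$). Lemma~\ref{ct201101201402} directly yields $D_1\in W(\underline{t})$ with $\phi_1:=\phi-\mathrm{ad}\,D_1$ vanishing on $X(\underline{t})_{-r}$, which already gives $\phi_1|_{X^-}=0$ in the depth-one cases.

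In the depth-two cases the remaining layer $X(\underline{t})_{-1}$ is spanned by mixed generators of the shape $\sigma(i)\partial_{i'}+x_i\partial_m$ (or the $2m{+}1$-analogues), which do not fit the literal hypothesis of Lemma~\ref{ct201101201402}. I would adapt that argument: applying the derivation identity to $\phi_1([u,u'])$ for $u,u'\in X_{-1}$ and using $[X_{-1},X_{-1}]\subset X_{-r}$ together with $\phi_1|_{X_{-r}}=0$ supplies the cocycle-type compatibility required by Lemma~\ref{ct201101201245}; generalized invertibility follows from integrating operators constructed in analogy with $\Phi_i$. This produces $D_2\in C_{W(\underline{t})}(X_{-r})$ with $\phi_1-\mathrm{ad}\,D_2$ vanishing on $X_{-1}$, the centralizer condition on $D_2$ ensuring $X_{-r}$ remains killed. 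Setting $D:=D_1+D_2$ finishes the first assertion.

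For the second assertion, $\mathbb{Z}$-homogeneity of $\phi$ allows the $D$ above to be chosen in $W(\underline{t})_k$; set $\psi:=\phi-\mathrm{ad}\,D\in\mathrm{Der}_k(X,W)$, so $\psi|_{X^-}=0$. For $x\in X_j$ with $j\geq 0$ and $y\in X^-$ the derivation identity reduces to $\psi([x,y])=[\psi(x),y]$, and choosing $y$ so that $[x,y]\in X^-$ forces $[\psi(x),y]=0$; letting $y$ range over $X^-$ yields $\psi(x)\in C_{W(\underline{t})}(X^-)$. A direct calculation shows this centralizer is concentrated in low $\mathbb{Z}$-degrees of $W(\underline{t})$, so that combined with $\psi(x)\in W(\underline{t})_{k+j}$ and the bound $k+j\geq -r+1$ one obtains $\psi(x)=0$; residual boundary cases (e.g.\ $j=0$ when $k=-r+1$ in the depth-two setting) can be closed by iterating the centralizer argument with $y$ ranging also over $X_{\geq 1}$, exploiting the fact that $\psi$ has already been shown to vanish on higher $X_j$.

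The step I expect to be the main obstacle is the passage from $X_{-r}$ to $X_{-1}$ in the depth-two cases of the first assertion: the mixed shape of the generators forces one to work outside the literal hypothesis of Lemma~\ref{ct201101201402}, and arranging the correcting $D_2$ to centralize $X_{-r}$ while simultaneously killing $X_{-1}$ requires careful case-by-case verification of the generalized invertibility and compatibility hypotheses of Lemma~\ref{ct201101201245}, with signs and sectors tracked uniformly across the three families $K$, $KO$, $SKO$.
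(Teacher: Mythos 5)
Your plan follows the paper's proof essentially step for step: Lemma~\ref{ct201101201402} disposes of the depth-one families and of the bottom component $X_{-r}$ in the contact-type cases; the $(-1)$-layer of $K$, $KO$, $SKO$ is then handled exactly as in the paper by feeding the compatibility relations coming from $[X_{-1},X_{-1}]\subset X_{-r}\subset\ker$ into Lemma~\ref{ct201101201245} (the paper's relation (\ref{ct1102171422}) with $A_r=\sigma(r')\mathrm{ad}\,\partial_r$, $B_r=\Phi_r$, and the condition $\partial_m(D_2)=0$ playing the role of your centralizer requirement on $D_2$); and the finite-dimensional assertion is obtained by the same upward induction on the $\mathbb{Z}$-degree using the centralizer of the negative part (the paper cites Lemma~\ref{ct101120l2.2}(4) for this). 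The paper's own write-up is no more detailed at the points you flag as delicate, so your proposal is a faithful reconstruction of the intended argument.
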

\begin{proof}
Without loss of generality one may assume that $\phi$ is
$\mathbb{Z}_2$-homogeneous. Obviously, the conclusions hold for
$X=W, S, H, HO$ or $ SHO$.

For $X=K$, by Lemma \ref{ct201101201402}, there exists $D_1\in
{W}(\underline{t})$ such that $(\phi-\mathrm{ad}D_1)(1)=0$. Put
$\varphi=\phi-\mathrm{ad}D_1$. Then  for any $r, q\in
\mathbf{I}\backslash\{m\}$, we have
$\partial_m\big(\varphi(x_r)\big)=0.$ It follows that
\begin{eqnarray}
\sigma(r')\partial_r\big((-1)^{(|\phi|+1)|\partial_q|}\varphi(x_{q'})\big)
=(-1)^{|\partial_r||\partial_q|}
\sigma(q')\partial_q\big((-1)^{(|\phi|+1)|\partial_r|}\varphi(x_{r'})\big).\label{ct1102171422}
\end{eqnarray}
Put $A_r=\sigma(r')\mathrm{ad}\partial_r$, $B_r=\Phi_r$ and
$v_r=(-1)^{(|\phi|+1)|\partial_r|}\varphi(x_{r'})$ for
$r\in\mathbf{I}\backslash{m}$.
 From (\ref{ct1102171422}) we have  by  a direct computation that $v_r$ is  $r$-integral.
By Lemmas \ref{ct201101201245} and \ref{ct201101201402},  there exists $D_2\in {W}(\underline{t})$
 with $\partial_m(D_2)$=0 such that
$A_r(D_2)=v_r$. It is easy to prove that there exists $D\in
{W}(\underline{t})$ such that $\phi-\mathrm{ad}D$ vanishes on
$K^{-}(\underline{t})$.

For $X=KO$ or $SKO$, the proof is similar. When
$X(\underline{t})$ is finite dimensional, by induction on $i$ and
Lemma \ref{ct101120l2.2}(4) we obtain that $\phi-\mathrm{ad}D$
vanishes on $X(\underline{t})$ and $D\in W(\underline{t})_{k}.$
 \end{proof}

\begin{proposition}\label{ct11012110582.2}
Let $\phi\in\mathrm{Der}\big(X(\underline{\infty})\big) $.
If $\phi\big(X^{-}(\underline{\infty})\big)=0$, then $\phi$ leaves $X(\underline{t})$ invariant for any
 $\underline{t}\not=\underline{\infty}$.

\end{proposition}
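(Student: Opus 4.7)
The plan is to characterize $X(\underline{t})$ inside $X(\underline{\infty})$ as the common kernel of a finite family of iterated adjoints $(\mathrm{ad}\,v_i)^{p^{t_i}}$ with $v_i\in X^-(\underline{\infty})$, and then to exploit the hypothesis $\phi(X^-(\underline{\infty}))=0$ to show that $\phi$ graded-commutes with each $\mathrm{ad}\,v_i$, hence preserves their common kernel. After assuming $\phi$ is $\mathbb{Z}_2$-homogeneous, the whole argument pivots on the two facts: (i) $v_i$ is even and killed by $\phi$, so $(\mathrm{ad}\,v_i)^{p^{t_i}}\circ\phi=\phi\circ(\mathrm{ad}\,v_i)^{p^{t_i}}$; and (ii) an element of $X(\underline{\infty})$ belongs to $X(\underline{t})$ iff it is annihilated by all these operators.

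For the choice of $v_i$ ($i\in\mathbf{I}_0$): in $X=W,S,H,HO,SHO$ the partial derivative $\partial_i$ itself lies in $X^-(\underline{\infty})$ (immediate for $W,S$; for $H,HO,SHO$ note $\pm\partial_i=D_H(x_{i'})$ or $T_H(x_{\tilde i})$, and $\mathrm{div}(\partial_i)=0$), so I set $v_i=\partial_i$. For $X=K,KO,SKO$ the partial $\partial_i$ may fail to lie in $X$, so for $i\in\mathbf{I}_0\setminus\{m\}$ I take $v_i=D_K(x_{i'})=\sigma(i')\partial_i+x_{i'}\partial_m\in K^-(\underline{\infty})$ (with the obvious $KO,SKO$ analogues), and set $v_m=\tfrac12 D_K(1)=\partial_m$. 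In every case $v_i$ is even and, by hypothesis, $\phi(v_i)=0$, so the Leibniz rule gives the commutation $\mathrm{ad}\,v_i\circ\phi=\phi\circ\mathrm{ad}\,v_i$.

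The kernel characterization rests on the basic identity $\partial_i^{p^{t_i}}(x^{(\alpha)})=x^{(\alpha-p^{t_i}\varepsilon_i)}$, which vanishes exactly when $\alpha_i\le\pi_i=p^{t_i}-1$; hence $\mathcal{O}(m,n;\underline{t})=\{a\in\mathcal{O}(m,n)\mid\partial_i^{p^{t_i}}(a)=0\text{ for all }i\in\mathbf{I}_0\}$. Transferring this through the isomorphisms $H\cong\bar{\mathcal{O}}$, $HO\cong\bar{\mathcal{O}}$, $K\cong\mathcal{O}$, $KO\cong\mathcal{O}$ and the identity $X(\underline{t})=X(\underline{\infty})\cap W(\underline{t})$ (true case-by-case, since divergence, $\mathrm{div}_\lambda$, $D_H$, $T_H$, $D_K$, $D_{KO}$ all respect the divided-power filtration) yields
$$X(\underline{t})=\bigl\{y\in X(\underline{\infty})\,\big|\,(\mathrm{ad}\,v_i)^{p^{t_i}}(y)=0\text{ for all }i\in\mathbf{I}_0\bigr\}.$$
For the contact families one additionally needs the identity
$$(\mathrm{ad}\,D_K(x_{i'}))^{p^{t_i}}D_K(a)=\sigma(i')^{p^{t_i}}D_K\bigl(\partial_i^{p^{t_i}}(a)\bigr),$$
which follows because $\partial_i$ commutes with $x_{i'}\partial_m$ for $i\ne m$ and because $x_{i'}^{p^{t_i}}=(p^{t_i})!\,x_{i'}^{(p^{t_i})}=0$ in characteristic $p$, killing all non-leading terms in the binomial expansion of $(\sigma(i')\partial_i+x_{i'}\partial_m)^{p^{t_i}}$. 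Combining commutation with the kernel description, for $y\in X(\underline{t})$ we get $(\mathrm{ad}\,v_i)^{p^{t_i}}(\phi(y))=\phi((\mathrm{ad}\,v_i)^{p^{t_i}}(y))=0$ for every $i$, so $\phi(y)\in X(\underline{t})$.

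The main obstacle is the case distinction for the contact families $K$, $KO$, $SKO$, where $\partial_i$ itself is unavailable as $v_i$ and one must verify the binomial collapse above; the rest of the argument is formal, driven by the even parity of each $v_i$ and the hypothesis $\phi(v_i)=0$.
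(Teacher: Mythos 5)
Your argument is essentially the paper's own proof: both characterize $X(\underline{t})$ inside $X(\underline{\infty})$ as the common kernel of operators $(\mathrm{ad}\,v_i)^{p^{t_i}}$ with $v_i\in X^{-}(\underline{\infty})$ (the paper takes $\partial_i$, resp.\ $x_i$ and $1$ under the identifications of $K$, $KO$, $SKO$ with $\mathcal{O}$, which are exactly your $D_K(x_{i'})$, $\tfrac12 D_K(1)$, etc.), and then use the hypothesis $\phi\big(X^{-}(\underline{\infty})\big)=0$ to commute $\phi$ past these operators. The only difference is that you make explicit the binomial collapse $(\sigma(i')\partial_i+x_{i'}\partial_m)^{p^{t_i}}=\sigma(i')^{p^{t_i}}\partial_i^{p^{t_i}}$ for the contact families, which the paper leaves implicit.
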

\begin{proof}
Let $E\in X(\underline{\infty})$. A sufficient and necessary condition for $E\in X(\underline{t})$ is  that
\begin{itemize}
\item[$\mathrm{(1)}$] for $X=W, S, H, HO$ or $ SHO$,
 $(\mathrm{ad}\partial_i)^{p^{t_i}}(E)=0$  for all $i\in \mathbf{I}_0;$
\item[$\mathrm{(2)}$] for $X=KO$ or $SKO$,
 $(\mathrm{ad}x_i)^{p^{t_i}}(E)=0$ for all $i\in \mathbf{I}_1\backslash\{2m+1\};$
\item[$\mathrm{(3)}$] for $X=K$,
 $(\mathrm{ad}x_i)^{p^{t_i}}(E)=(\mathrm{ad}1)^{p^{t_m}}(E)=0$ for all $i\in \mathbf{I}_0\backslash\{m\}.$
\end{itemize}
Since $\phi(X^{-}(\underline{\infty}))=0$
  we  obtain that
\begin{itemize}
\item[] $[\phi, \mathrm{ad}\partial_i]=0$  for all $i\in \mathbf{I}_0
\mbox{ and } X=W, S, H, HO \mbox{ or } SHO;$
\item []
 $[\phi, \mathrm{ad}x_i]=0$  for all $i\in \mathbf{I}_1\backslash\{2m+1\}$
   and $X=KO$ or $SKO;$
  \item[]
 $[\phi, \mathrm{ad}x_i]=[\phi, \mathrm{ad}1]=0$  for all
 $i\in \mathbf{I}_0\backslash\{m\} $ and $X=K.$
\end{itemize}
Thus we have $\phi(X(\underline{t}))\subset X(\underline{t})$.
\end{proof}

In order to prove the next proposition, we  establish a technical lemma.
\begin{lemma}\label{ct1101211513} Let $L=HO, SHO, KO$ or $SKO$. Suppose $A_L$ is a subalgebra of $L(\underline{\infty})$ and $\underline{s}\geq \underline{1}$.
If $L(\underline{s})^{(2)}+\mathbb{F}x^{((p^{s_i+1}-1)\varepsilon_i)}\subset A_L$, then $L(\underline{s}+\varepsilon_i)^{(2)}\subset A_L$.
\end{lemma}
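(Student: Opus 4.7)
The strategy is to produce every standard basis element of $L(\underline{s}+\varepsilon_i)^{(2)}$ not already in $L(\underline{s})^{(2)}$ by forming brackets of $x^{((p^{s_i+1}-1)\varepsilon_i)}$ with elements of $L(\underline{s})^{(2)}$. Under the identifications $HO(\underline{t})\cong\bar{\mathcal{O}}(m,m;\underline{t})$, $KO(\underline{t})\cong\mathcal{O}(m,m+1;\underline{t})$, and their variants for $SHO$, $SKO$, the elements of $L(\underline{s}+\varepsilon_i)^{(2)}$ correspond to truncated monomials $x^{(\alpha)}x^{u}$ with $\alpha\in\mathbb{A}(m;\underline{s}+\varepsilon_i)$, and the new ones (not in $L(\underline{s})^{(2)}$) are precisely those with $\alpha_i\in\{p^{s_i},\ldots,p^{s_i+1}-1\}$ and $\alpha_j\leq p^{s_j}-1$ for $j\neq i$.

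I treat $L=HO$ first. The decisive computation is $T_H(x^{((p^{s_i+1}-1)\varepsilon_i)})=x^{((p^{s_i+1}-2)\varepsilon_i)}\partial_{\tilde i}$, so that for any $x^{(\alpha)}x^{u}\in L(\underline{s})^{(2)}$ with $\tilde i\in u$,
\[
[x^{((p^{s_i+1}-1)\varepsilon_i)},\,x^{(\alpha)}x^{u}]_{HO}
=\pm\binom{p^{s_i+1}-2+\alpha_i}{\alpha_i}\,x^{((p^{s_i+1}-2)\varepsilon_i+\alpha)}x^{u\setminus\tilde{i}}.
\]
Choosing $\alpha_i=0$, the binomial equals $1$, producing $x^{((p^{s_i+1}-2)\varepsilon_i+\beta)}x^{v}$ for every $\beta\in\mathbb{A}(m;\underline{s})$ with $\beta_i=0$ and every shuffle $v$ with $\tilde i\notin v$. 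Choosing instead $\alpha=\varepsilon_i+\beta$ with $\beta_i=0$, the binomial is $p^{s_i+1}-1\equiv -1\pmod p$, which is nonzero and yields $x^{((p^{s_i+1}-1)\varepsilon_i+\beta)}x^{v}$. Thus $A_L$ already contains both the top and next-to-top $i$-th divided powers paired with arbitrary $\beta$ (of the permitted form) and arbitrary shuffle not containing $\tilde i$.

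Since $\underline{s}\geq\underline{1}$, the element $x_{\tilde i}$ lies in $L(\underline{s})^{(2)}$ and acts on $\bar{\mathcal{O}}$ as $\mathrm{ad}(x_{\tilde i})=-\partial_i$. Iterating this operation on the elements just produced sweeps the $i$-th divided power down through every value in $[p^{s_i},p^{s_i+1}-1]$. To cover shuffles $v$ that contain $\tilde i$, I use instead
\[
[x^{(\varepsilon_i+\beta)}x_{\tilde i}x^{w},\,x^{((p^{s_i+1}-1)\varepsilon_i)}x^{u}]_{HO},
\]
with $\tilde i\in u$, $\tilde i\notin w$, $w\cap u=\emptyset$; the $x_{\tilde i}\partial_{\tilde i}$-component of $T_H(x^{(\varepsilon_i+\beta)}x_{\tilde i}x^{w})$ contributes a term whose shuffle part is $\{\tilde i\}\cup w\cup(u\setminus\tilde i)$, while the other components of $T_H$ produce monomials with shuffles or $i$-th powers already shown to lie in $A_L$. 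Subtracting these off isolates the desired $x^{((p^{s_i+1}-1)\varepsilon_i+\beta)}x^{v}$ with $\tilde i\in v$, and a final application of $\mathrm{ad}(x_{\tilde i})$ delivers the intermediate $i$-th powers with $\tilde i\in v$.

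For $SHO$ the same brackets apply: every produced element lies in $\ker(\mathrm{div})$ because $x^{((p^{s_i+1}-1)\varepsilon_i)}\in SHO(\underline{\infty})$ (since $\mathrm{div}(x^{((p^{s_i+1}-2)\varepsilon_i)}\partial_{\tilde i})=0$) and $[\,,\,]_{HO}$ preserves $\ker(\mathrm{div})$. For $KO$ and $SKO$, $D_{KO}$ has extra contact contributions from $\mathfrak{D}$ and $\partial_{2m+1}$, but on $x^{((p^{s_i+1}-1)\varepsilon_i)}$, and more generally on the monomials with the largest $i$-th divided power, these corrections either vanish or reduce to monomials with strictly lower $i$-th power, which are already accessible; so the identical scheme succeeds. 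The main obstacle will be the careful sign and coefficient bookkeeping---especially ensuring every modular binomial that appears remains non-vanishing in $\mathbb{F}$---together with systematically isolating the correct component of the brackets in the contact cases, where $D_{KO}$ is not a pure Hamiltonian derivation.
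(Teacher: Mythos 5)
Your strategy---generating every monomial of $L(\underline{s}+\varepsilon_i)^{(2)}$ one by one---diverges from the paper's, which produces a \emph{single} nonzero element of $A_L$ in the top (or next-to-top) $\mathbb{Z}$-graded component of $L(\underline{s}+\varepsilon_i)^{(2)}$ (by bracketing $x^{((p^{s_i+1}-1)\varepsilon_i)}$ against one explicit element $f_L\in L(\underline{s})$) and then invokes Lemma \ref{ct101120l2.2}: a subalgebra of a finite-dimensional $\mathbb{Z}$-graded simple Lie superalgebra containing $L_{-1}\oplus L_{1}$ and meeting $L_{h-1}$ nontrivially must be all of $L$. Since the degree $\pm 1$ components of $L(\underline{s}+\varepsilon_i)^{(2)}$ involve only small exponents and hence already lie in $L(\underline{s})^{(2)}\subset A_L$, the whole lemma reduces to one bracket computation per family. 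Missing this reduction is not merely stylistic, because your direct generation argument has concrete gaps.

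First, the step covering shuffles containing $\tilde i$ is circular: in $[x^{(\varepsilon_i+\beta)}x_{\tilde i}x^{w},\,x^{((p^{s_i+1}-1)\varepsilon_i)}x^{u}]_{HO}$ with $\tilde i\in u$, the second argument is itself one of the target elements (top $i$-th divided power together with a shuffle containing $\tilde i$) that has not yet been shown to lie in $A_L$; your first bracket only delivers shuffles avoiding $\tilde i$. Second, for $SHO$ and $SKO$ the premise that $L(\underline{s}+\varepsilon_i)^{(2)}$ ``corresponds to truncated monomials $x^{(\alpha)}x^{u}$'' is false: under the function realization these algebras are cut out by a divergence condition ($\sum_{j}\partial_j\partial_{\tilde j}(a)=0$, resp.\ $\mathrm{div}_{\lambda}(a)=0$), which individual monomials such as $x_1x_{\tilde 1}$ do not satisfy. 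Consequently the monomials you feed into the bracket need not lie in $SHO(\underline{s})^{(2)}$, and the monomials you produce need not lie in the target, so ``the same brackets apply'' does not establish the inclusion for these two families. Finally, the $KO$/$SKO$ contact corrections and the non-vanishing of the modular binomial coefficients are exactly where the remaining work sits, and you defer both. The paper's route through Lemma \ref{ct101120l2.2}, with the explicit elements $f_{HO}$, $f_{SHO}$, $f_{KO}$, $f_{SKO}$ and the single computation $[x^{((p^{s_i+1}-1)\varepsilon_i)},f_X]_X$, avoids all of these difficulties.
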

\begin{proof}
Note that
\begin{eqnarray*}
\begin{split}
 &f_{HO}:=x^{(\pi-(p^{s_i}-2)\varepsilon_i)}x^\omega \in HO(\underline{s}); \quad
   f_{KO}:=x^{(\pi-(p^{s_i}-2)\varepsilon_i)}x^\omega \in KO(\underline{s});\\
 &f_{SHO}:=\sum_{j=1}^mx^{(\pi-(p^{s_i}-2)\varepsilon_i-\varepsilon_j)}\partial_{\tilde{j}}(x^{\omega}) \in SHO(\underline{s});\\
 &f_{SKO}:=\sum_{j=1}^mx^{(\pi-(p^{s_i}-2)\varepsilon_i-\varepsilon_j)}\partial_{\tilde{j}}(x^\omega)\\
&\quad\quad\quad\quad+(-1)^{m-1}m\lambda
x^{(\pi-(p^{s_i}-2)\varepsilon_i)}\partial_{\tilde{j}}(x^{\omega-\langle{2m+1}\rangle})\in SKO(\underline{s}).
\end{split}
\end{eqnarray*}
Computing $\big[x^{((p^{s_i+1}-1)\varepsilon_i)},
f_{X}\big]_{X}$, one gets
\begin{itemize}
\item[$\mathrm{(1)}$] $x^{(\pi+(p-1)p^{s_i}\varepsilon_i)}x^{\omega-\langle{\tilde{i}}\rangle}\in A_{HO}\cap HO(\underline{s}+\varepsilon_i)^{(2)}_{h(HO)-1}$;
\item[$\mathrm{(2)}$] $x^{(\pi+(p-1)p^{s_i}\varepsilon_i)}x^{\omega-\langle{\tilde{i}}\rangle}\in A_{KO}\cap KO(\underline{s}+\varepsilon_i)^{(2)}_{h(KO)-1}$;
\item[$\mathrm{(3)}$] $\sum_{j=1}^mx^{(\pi+(p-1)p^{s_i}\varepsilon_i-\varepsilon_j)}
\partial_{\tilde{i}}\partial_{\tilde{j}}(x^{\omega})\in A_{SHO}\cap SHO(\underline{s}
+\varepsilon_i)^{(2)}_{h(SHO)}$;
\item[$\mathrm{(4)}$] $ A_{SKO}\cap \big(SKO(\underline{s}+\varepsilon_i)^{(2)}_{h(SKO)}+SKO(\underline{s}+\varepsilon_i)^{(2)}_{h(SKO)-1}\big)$
contains the element
\begin{eqnarray*}
\begin{split}
 (-1)^{m}(m\lambda+3)x^{(\pi+(p-1)p^{s_i}\varepsilon_i)}\partial_{\tilde{i}}
 (x^{\omega-\langle{2m+1}\rangle})
&-\sum_{j=1}^mx^{(\pi+(p-1)p^{s_i}\varepsilon_i-\varepsilon_j)}
\partial_{\tilde{i}}\partial_{\tilde{j}}(x^\omega),
\end{split}
\end{eqnarray*}
\end{itemize}
where $h(L)$ is the
height  of
$L({\underline{s}+\varepsilon_i})^{(2)}$ (see
Remark \ref{ctr101120}). Now the conclusion follows from   Lemma \ref{ct101120l2.2}.
\end{proof}

\begin{proposition}\label{ct101120l2.3}
Let $A_X$ denote a subalgebra of $X(\underline{\infty})$ and $\underline{s}\geq \underline{1}$.
Put
\[ \begin{array}{lll} E_W=x^{(p^{s_i}\varepsilon_i)}\partial_j\, \,\mbox{for some} \, \, j\in \mathbf{I}_0; &E_H=x^{((p^{s_i}+1)\varepsilon_i)};
\\E_S=D_{ij}(x^{((p^{s_i}+1)\varepsilon_i)})\, \,\mbox{for some} \, \, j\in \mathbf{I}_0\backslash\{i\}; &E_K=x^{(p^{s_i}\varepsilon_i)};
\\E_{HO}=E_{SHO}=x^{((p^{s_i}+1)\varepsilon_i)}; &E_{KO}=E_{SKO}=x^{(p^{s_i}\varepsilon_i)}.
\end{array}\]
If $X(\underline{s})^{(2)}+\mathbb{F}E_{X}\subset A_X$, then $X(\underline{s}+\varepsilon_i)^{(2)}\subset A_X$.
\end{proposition}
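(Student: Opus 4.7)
The plan is to reduce, case by case, to the simplicity criterion packaged in Lemma~\ref{ct101120l2.2}. Since $X(\underline{s}+\varepsilon_i)^{(2)}$ is a $\mathbb{Z}$-graded simple Lie superalgebra of some depth $r$ and height $h(X)$ (see Remark~\ref{ctr101120}), the last assertion of Lemma~\ref{ct101120l2.2} guarantees that any subalgebra of $X(\underline{s}+\varepsilon_i)^{(2)}$ containing its $(-1)$- and $1$-pieces and meeting $X(\underline{s}+\varepsilon_i)^{(2)}_{h(X)-1}$ nontrivially is the whole algebra. Thus it suffices to show that $A_X$ (intersected with $X(\underline{s}+\varepsilon_i)^{(2)}$) satisfies these two conditions.

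For the low-degree half, observe that the graded pieces $X(\underline{s}+\varepsilon_i)^{(2)}_{-1}$ and $X(\underline{s}+\varepsilon_i)^{(2)}_{1}$ are spanned by monomials whose $i$-th divided-power exponent is at most $1<p^{s_i}$, so they already sit inside $X(\underline{s})^{(2)}\subset A_X$. Hence this condition is automatic. The real work is to produce a nonzero element of $X(\underline{s}+\varepsilon_i)^{(2)}_{h(X)-1}$ inside $A_X$, which I carry out in two groups.

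For $X=W,S,H,K$, I would bracket the given element $E_X$ with a judiciously chosen top-degree element of $X(\underline{s})^{(2)}$ (for instance, for $X=W$, brackets involving $x^{(\pi)}x^{\omega}\partial_k$ with $k$ chosen so that the action hits the $i$-th slot productively). Using the divided-power product $x^{(\alpha)}x^{(\beta)}=\binom{\alpha+\beta}{\alpha}x^{(\alpha+\beta)}$ one obtains, after finitely many brackets, an element of the form $x^{((p^{s_i}+\pi_i)\varepsilon_i)}x^{\omega}\partial_{\ast}$ (with the appropriate adjustments for $S$, $H$, $K$) whose coefficient is a product of binomial coefficients that Lucas' theorem keeps nonzero modulo $p$; this element lies in the top (or penultimate) piece of $X(\underline{s}+\varepsilon_i)^{(2)}$, and applying Lemma~\ref{ct101120l2.2}(3) drops it to $X(\underline{s}+\varepsilon_i)^{(2)}_{h(X)-1}$ if needed.

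For $X=HO,SHO,KO,SKO$, the strategy is to manufacture the monomial $x^{((p^{s_i+1}-1)\varepsilon_i)}$ inside $A_X$ and then invoke Lemma~\ref{ct1101211513} directly. Starting from $E_X$, whose $i$-th exponent is $p^{s_i}$ or $p^{s_i}+1$, I iteratively bracket against $x^{((p-1)p^{s_i}\varepsilon_i-\text{low order})}$-type elements of $X(\underline{s})^{(2)}$; the divided-power coefficients involved are of the form $\binom{kp^{s_i}+\ell}{p^{s_i}}$ with $0\le\ell<p^{s_i}$ and $k<p$, all nonzero modulo $p$ by Lucas. After $p-1$ such climbs I land on $x^{((p^{s_i+1}-1)\varepsilon_i)}\in A_X$, which together with the hypothesis $X(\underline{s})^{(2)}\subset A_X$ triggers Lemma~\ref{ct1101211513}.

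In both groups the main obstacle is the bookkeeping of divided-power binomial coefficients modulo $p$ — one must verify along each chain of brackets that the coefficient never falls to zero. For $W,S,H,K$ a single well-chosen bracket typically suffices; for $HO,SHO,KO,SKO$ the iteration of length $p-1$ is more delicate, which is precisely why Lemma~\ref{ct1101211513} was set up in advance to absorb the remaining structural content after the critical monomial $x^{((p^{s_i+1}-1)\varepsilon_i)}$ is in hand.
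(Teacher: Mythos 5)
Your overall strategy coincides with the paper's: reduce via the last assertion of Lemma~\ref{ct101120l2.2} to exhibiting a nonzero element of $A_X$ in the top (or penultimate) graded piece of $X(\underline{s}+\varepsilon_i)^{(2)}$, and for $X=HO,SHO,KO,SKO$ manufacture $x^{((p^{s_i+1}-1)\varepsilon_i)}\in A_X$ and hand off to Lemma~\ref{ct1101211513}. The observation that the $\pm1$-pieces are already in $X(\underline{s})^{(2)}$ and the Lucas-type coefficient $\binom{kp^{s_i}+\ell}{p^{s_i}}\equiv k$ for the length-$(p-1)$ climb are both correct and match what the paper does (the paper realizes the climb as $\big(\mathrm{ad}\,x^{(p^{s_i}\varepsilon_i)}x_{\tilde{j}}\big)^{p-1}$ applied to $x^{((p^{s_i}-1)\varepsilon_i+(p-1)\varepsilon_j)}$, after first extracting $x^{(p^{s_i}\varepsilon_i)}x_{\tilde j}$, resp.\ $x^{((p^{s_i}+1)\varepsilon_i)}$, from $E_X$ by one bracket).

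There is, however, a genuine gap in your treatment of $X=S$, which you fold into the $W,H,K$ group ``with the appropriate adjustments.'' This conceals a real difficulty: every element you produce must remain divergence-free, i.e.\ of the form $D_{kl}(a)$, so you cannot simply land on a monomial $x^{(\ast)}x^{\omega}\partial_{\ast}$, and the relevant binomial coefficient $\binom{lp^{a}-b}{p^{a}}=l-1$ \emph{vanishes} at the first step $l=1$, so a single bracket of $E_S$ against a top element can die. The paper circumvents this with a separate induction on $l=1,\dots,p-1$: from $B_l=D_{ij}(x^{(\pi+(l-1)p^{s_i}\varepsilon_i)}x^{\omega})$ it first forms $l^{-1}[D_{ij}(x^{((p^{s_i}+1)\varepsilon_i)}),B_l]$ and then brackets once more with $D_{ik}(x^{(\varepsilon_i+\varepsilon_j)}x_k)$ to obtain $-lB_{l+1}$, the coefficients $l^{-1}$ and $-l$ being nonzero throughout, until $B_p\neq0$ sits in degree $h(S)$. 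Relatedly, for $W,H,K$ your claim that ``a single well-chosen bracket typically suffices'' understates the work: the paper first invokes \cite[Lemma 5.2.6]{Strade} (an iterated climbing argument in its own right) to place $f_W$, $f_H$, $f_K$ with $i$-th exponent $p^{s_i+1}-1$ into $A_X$, and only then does one final bracket reach degree $h(X)$ or $h(X)-1$. Neither point is unfixable, but as written your proposal does not constitute a proof for $S$.
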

\begin{proof}
By Lemma \ref{ct101120l2.2}, it is sufficient to
  show that $A_X\cap
X(\underline{s}+\varepsilon_i)^{(2)}_{h(X)-1}\neq 0$ or $A_X\cap
X(\underline{s}+\varepsilon_i)^{(2)}_{h(X)}\neq 0$, where $h(X)$ is the
height of
$X({\underline{s}+\varepsilon_i})^{(2)}$ (see
Remark \ref{ctr101120}).
From \cite[Lemma 5.2.6]{Strade} it follows that
\begin{eqnarray*}
\begin{split}
 &f_W:=x^{(\pi+(p-1)p^{s_i}\varepsilon_i)}\partial_r\in A_W\cap W(\underline{s}+\varepsilon_i)_{h(W)-n}, \mbox{ where } r\in \mathbf{I}_0\backslash\{i\};\\
 &f_H:=x^{(\pi+[(p-1)p^{s_i}-1]\varepsilon_i-\varepsilon_{i'})} \in A_H\cap
H(\underline{s}+\varepsilon_i)^{(2)}_{h(H)-n-1};\\
&f_K:=x^{(\pi+(p-1)p^{s_i}\varepsilon_i-\varepsilon_\rho)} \in A_K\cap \big(K(\underline{s}+\varepsilon_i)^{(2)}_{h(K)-n}+K(\underline{s}+\varepsilon_i)^{(2)}_{h(K)-n-1}\big),
\end{split}
\end{eqnarray*}
where $\rho=i$ if $i\not=m$; $\rho=1$ if $i=m$.

Computing $ \big[f_W, x_rx^\omega\partial_l]$, $\big[f_H, x^{(2\varepsilon_i)}x^\omega\big]_{H}$ and $\big[f_K, x^{(2\varepsilon_\rho)}x^\omega\big]_K$,
 respectively, one gets
\begin{itemize}
\item[$\mathrm{(1)}$] $x^{(\pi+(p-1)p^{s_i}\varepsilon_i)}x^\omega\partial_l \in A_{W}\cap W(\underline{s}+\varepsilon_i)^{(2)}_{h(W)}$, where
$l\in \mathbf{I}_1$;
\item[$\mathrm{(2)}$] $\sigma(i)x^{(\pi+(p-1)p^{s_i}\varepsilon_i-2\varepsilon_{i'})}x^\omega\in A_{H}\cap H(\underline{s}+\varepsilon_i)^{(2)}_{h(H)-1}$;
\item[$\mathrm{(3)}$] $\sigma(\rho)x^{(\pi+(p-1)p^{s_i}\varepsilon_i-\varepsilon_{\rho'})}x^\omega\in
A_K\cap \big(K(\underline{s}+\varepsilon_i)^{(2)}_{h(K)}+K(\underline{s}+\varepsilon_i)^{(2)}_{h(K)-1}\big)$.
\end{itemize}

It follows that $X(\underline{s}+\varepsilon_i)^{(2)}\subset A_X$ for $X=W, H$ or $K$.
For $X=HO$ or $SHO$, choosing $j\in \mathbf{I}_0\backslash\{i\}$, we have
\begin{itemize}
\item[]$x^{(p^{s_i}\varepsilon_i)}x_{\tilde{j}}=\big[x^{((p^{s_i}+1)\varepsilon_i)},
x_{\tilde{i}}x_{\tilde{j}}\big]_{HO}\in A_{X}.$
\end{itemize}
 Then
$$x^{((p^{s_i+1}-1)\varepsilon_i)}=\big((p-1)!\big)^{-1}\big(\mathrm{ad}x^{(p^{s_i}\varepsilon_i)}x_{\tilde{j}}\big)^{p-1}
\big(x^{((p^{s_i}-1)\varepsilon_i+(p-1)\varepsilon_{j}}\big)\in A_{X}.$$
Observe  that, for $X=KO$ or $SKO$,
\begin{itemize}
\item[]$x^{((p^{s_i}+1)\varepsilon_i)}=-2^{-1}\big[x^{(p^{s_i}\varepsilon_i)},
x_{i}x_{2m+1}\big]_{KO}\in A_{X}.$
\end{itemize}
As in the case
$X=HO$, we  have
$x^{((p^{s_i+1}-1)\varepsilon_i)}\in A_X.$
From Lemma \ref{ct1101211513}, we   obtain that $X(\underline{s}+\varepsilon_i)^{(2)}\subset A_X$
for $X=HO, SHO, KO$ or $ SKO.$

For $X=S$, without loss of   generality, we may
assume inductively  that $A_S$ contains
$B_l:=D_{ij}(x^{(\pi+(l-1)p^{s_i}\varepsilon_i)}x^{\omega})$,
where  $1\leq l\leq p-1$.
Note that
for $1\leq a$, $1\leq b\leq p-1$, $1\leq l\leq p,$
\begin{itemize}
\item[]$\begin{pmatrix}
lp^a-b\\
p^a
\end{pmatrix}
=
\begin{pmatrix}
l-1\\
1
\end{pmatrix}
=l-1.$
\end{itemize}
Then
\begin{itemize}
\item[]$D_{ij}(x^{(\pi+lp^{s_i}\varepsilon_i-\varepsilon_j)}
x^{\omega})=l^{-1}\big[D_{ij}(x^{((p^{s_i}+1)\varepsilon_i)}), B_l\big]\in A_S.$
\end{itemize}
Choose any $k\in\mathbf{I}_1.$ One sees that $A_S$ contains
\begin{itemize}
\item[]$\big[D_{ik}(x^{(\varepsilon_i+\varepsilon_j)}x_k), [D_{ij}(x^{((p^{s_i}+1)\varepsilon_i)}), B_l]\big]=-lB_{l+1},$
\end{itemize}
which implies that $0\neq B_p=D_{ij}(x^{(\pi+(p-1)p^{s_i}\varepsilon_i)}
x^{\omega})\in A_S\cap S(\underline{s}+\varepsilon_i)^{(2)}_{h(S)}$.
The proof is complete.
\end{proof}

\section{Superderivations }
 As before, $X=W$, $S$, $H$, $K$,
$HO$, $SHO$, $KO$ or $SKO$. Apparently, it is much easier
  to   determine the superderivations of $X(\underline{1})^{(2)}$ than to determine
   the superderivations of $X(\underline{t})$.
   On the other hand, $X(\underline{1})^{(2)}$ contains
   almost the
  whole elementary information of $X(\underline{t})$ in a sense: $X(\underline{1})^{(2)}$ contains all the
  formal variables of the underlying superalgebras and all the partial derivatives $\partial_i$ of
   $X(\underline{t})$. As expected, starting from the superderivation algebras of the ``basic'' subalgebra
   $X(\underline{1})^{(2)}$, we are able to determine the superderivations of the ``big'' algebra $X(\underline{t})$ and
   its derived algebra no matter  $X(\underline{t})$ is finite dimensional or not.

 We first introduce two ``exceptional'' superderivations for $HO$ and $SHO.$
\begin{itemize}
\item[$\mathrm{(1)}$]  By  \cite{Hua-Liu},   $HO(\underline{t})$ has an outer derivation
\begin{itemize}
\item[] $\Phi: HO(\underline{t}) \longrightarrow HO(\underline{t}),\quad
 f \longmapsto \sum_{i\in
\mathbf{I}_0}\partial_i\partial_{\tilde{i}}(f).$
\end{itemize}
\item[$\mathrm{(2)}$] By a direct computation,   we can show that $SHO(3,3;\underline{t})^{(2)}$ has an outer derivation
\begin{itemize}
\item[]
$\Theta: SHO(\underline{t}) \longrightarrow SHO(\underline{t}), \quad f \longmapsto \tau(f), $
\end{itemize}
where $\tau: \mathcal{O}(\underline{t}) \longrightarrow \mathcal{O}(\underline{t})$ is a linear operator such that for
$\alpha=\alpha_1\varepsilon_1+\alpha_2\varepsilon_2+\alpha_3\varepsilon_3, $
\begin{itemize}
\item[] $\tau(x^{(\alpha)}x^u)=(a_1x^{(\varepsilon_1)}\partial_{\tilde{2}}\partial_{\tilde{3}}
+a_2x^{(\varepsilon_2)}\partial_{\tilde{3}}\partial_{\tilde{1}}
+a_3x^{(\varepsilon_3)}\partial_{\tilde{1}}\partial_{\tilde{2}})(x^{(\alpha)}x^u),$
\end{itemize}
where
\begin{itemize}
\item[] $a_i= \left\{\begin{array}{ll}
\big[(1+b_i)(\alpha_i+1)\big]^{-1} \quad &\alpha_i\not\equiv -1\pmod{p},\\
0 &\alpha_i\equiv -1\pmod{p},
\end{array}\right.$
\end{itemize}
where $b_i=\sum_{j\in \{1, 2, 3\}}\delta_{\alpha_j\neq 0}\delta_{\tilde{j}\in u}$.
\end{itemize}

\begin{remark}\label{ct110611rn1}
In \cite{BLL},   $\Theta$  mentioned in (2) is neglected by mistake when $m=3$.
\end{remark}
As in Lie algebra case,   $X(\underline{1})^{(2)}$ is generated by its local part and one may determine its superderivations by a direct computation. Here we single out certain conclusions on the negative superderivations from \cite{BLL,Fu-Zhang-Jiang,Ma-Zhang,Liu-Zhang-Wang,Wang-Zhang,Zhang-Zhang}:

\begin{remark}\label{ct101120r2.1}
\begin{itemize}
\item[]$\mathrm{Der}^-(X(\underline{1})^{(2)})\cong X^{-}(\underline{1})^{(2)}
\oplus\delta_{X=HO}\mathbb{F}\Phi\oplus\delta_{X=SHO}\delta_{m=3}\mathbb{F}\Theta.$
\end{itemize}
Furthermore,
$$\big\{\phi \in\mathrm{Der}(X(\underline{1})^{(2)})\mid \phi(X^-(\underline{1})^{(2)})=0 \big\}\cong X(\underline{1})^{(2)}_{-r}
\oplus\delta_{X=HO}\mathbb{F}\Phi\oplus\delta_{X=SHO}\delta_{m=3}\mathbb{F}\Theta,
$$
where $r$ is the depth of $X(\underline{1})^{(2)}$.
\end{remark}

 Note that $T:=\sum_{i\in \mathbf{I}}\mathbb{F}x_i\partial_i$ is abelian and acts diagonally on  $W(m,n)$.  We call  $T_X:=X\cap T$ the \textit{canonical torus} of $X$.
 Following \cite[Lemma 6.1.3 ]{Strade}, we have

\begin{lemma}\label{101123ctt2.1}
Let $\widetilde{X}$ be a $\mathbb{Z}$-graded subalgebra of $X(\underline{t})$ containing  $X(\underline{t})^{(2)}$
and ${Q}:=\Big\{\phi \in\mathrm{Der}\big(\widetilde{X}, X(\underline{\infty})\big)\mid \phi(\widetilde{X}^-)=0 \Big\}$. Then
\begin{itemize}
\item[]$Q\cong \widetilde{X}_{-r} \oplus
\mathrm{span}_{\mathbb{F}}\Big\{
\sum_{j=1}^{\infty}\mathbb{F}\partial_i^{p^{j}}\mid i\in\mathbf{I}_0\Big\}
\oplus\delta_{X=HO}\mathbb{F}\Phi\oplus\delta_{\widetilde{X}=SHO(3,3;\underline{t})^{(2)}}\mathbb{F}\Theta,$
\end{itemize}
where $r$ is the depth of $\widetilde{X}$.
\end{lemma}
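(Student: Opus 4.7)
The plan is to analyze $Q$ by restricting its members to the finite restricted block $X(\underline{1})^{(2)}\subset\widetilde{X}$, where Remark \ref{ct101120r2.1} already classifies the relevant derivations, and then climbing back from $\underline{1}$ up to $\underline{t}$ via the step-up mechanism provided by Proposition \ref{ct101120l2.3}.

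Using the $\mathbb{Z}\times T^{*}$-grading on $\mathrm{Der}(\widetilde{X},X(\underline{\infty}))$, I may assume $\phi\in Q$ is $\mathbb{Z}$-homogeneous of some degree $k$ and $T^{*}$-weight-homogeneous. Transitivity of the Cartan-type grading, i.e.\ $C_{X(\underline{\infty})_{j}}(\widetilde{X}_{-1})=0$ for $j\ge 0$, combined with the identity $\phi([x,y])=(-1)^{|\phi||x|}[x,\phi(y)]$ applied to $x\in\widetilde{X}_{-1}$ (where $\phi(x)=0$), forces $\phi$ to vanish identically once $k\ge 0$, by induction on $\mathrm{zd}(y)$. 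Hence $Q$ is concentrated in strictly negative $\mathbb{Z}$-degrees. In the opposite direction, since $p^{j}>r$ for every $j\ge 1$ (as $p>3$ and $r\le 2$), each $\partial_{i}^{p^{j}}$ is of degree below $-r$ and so vanishes on $\widetilde{X}^{-}$; together with $\mathrm{ad}\,y$ for $y\in\widetilde{X}_{-r}$ and (in the two exceptional cases) $\Phi,\Theta$, all the alleged summands are genuinely in $Q$.

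For $k<0$ I restrict $\phi$ to $X(\underline{1})^{(2)}$, yielding a derivation into $X(\underline{\infty})$ that still kills $X(\underline{1})^{(2)-}$. A codomain-enlarged reading of Remark \ref{ct101120r2.1} then writes this restriction as $\mathrm{ad}\,y+c_{\Phi}\Phi+c_{\Theta}\Theta$ with $y\in X(\underline{1})^{(2)}_{-r}\subset\widetilde{X}_{-r}$ and the scalars $c_{\Phi},c_{\Theta}$ nonzero only in the respective exceptional cases. Subtracting these contributions from $\phi$ produces a residual $\phi'\in Q$ that vanishes on $X(\underline{1})^{(2)}$. I then climb from $\underline{1}$ to $\underline{t}$ one index at a time: at step $\underline{s}\to\underline{s}+\varepsilon_{i}$, assuming the restriction of $\phi'$ to $X(\underline{s})^{(2)}$ has already been identified as a known combination of $\partial_{i}^{p^{j}}$'s with $j<s_{i}$, transitivity pins $\phi'(x_{i}^{(p^{s_{i}})})$ down to a scalar multiple of $\partial_{i}^{p^{s_{i}}}(x_{i}^{(p^{s_{i}})})=1$, which identifies the coefficient of $\partial_{i}^{p^{s_{i}}}$ and reduces to a smaller problem.

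The main obstacle will be this last inductive step: precisely verifying that $\phi'(x_{i}^{(p^{s_{i}})})$ is forced (up to a single scalar) by transitivity, and that no further exceptional derivation beyond $\Phi,\Theta$ sneaks in during the climb. This requires a case-by-case analysis across $X\in\{W,S,H,K,HO,SHO,KO,SKO\}$: the divergence constraints (for $S,SHO,SKO$), the Hamiltonian-bracket identities (for $H,HO$), the contact structure (for $K,KO$), and the non-principal grading shift (for $K,KO,SKO$) must each be checked to confirm that the only freedom in extending from $\underline{s}$ to $\underline{s}+\varepsilon_{i}$ is exactly the $\partial_{i}^{p^{s_{i}}}$-coefficient.
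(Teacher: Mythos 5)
Your overall strategy --- reduce to the restricted block $X(\underline{1})^{(2)}$, invoke Remark \ref{ct101120r2.1}, then climb from $\underline{s}$ to $\underline{s}+\varepsilon_i$ using the elements $E_X$ and Proposition \ref{ct101120l2.3} --- is the same as the paper's, but as written the argument has several genuine gaps. First, the ``codomain-enlarged reading'' of Remark \ref{ct101120r2.1} is precisely the point that needs proof: that remark classifies derivations of $X(\underline{1})^{(2)}$ \emph{into itself}, and a priori a derivation into the much larger $X(\underline{\infty})$ could have image outside $X(\underline{1})^{(2)}$. The paper closes this via Proposition \ref{ct11012110582.2}: because $\phi$ kills $\widetilde{X}^{-}$, it commutes with the relevant $\mathrm{ad}\,\partial_i$ (resp.\ $\mathrm{ad}\,x_i$, $\mathrm{ad}\,1$) and hence preserves the subalgebras cut out by the vanishing of their $p^{s_i}$-th powers; only then does Remark \ref{ct101120r2.1} apply. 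You never invoke this invariance. Second, ``transitivity pins $\phi'(E_X)$ down to a scalar'' is not enough: the relation $\big[\phi'(E_X), X^{-}(\underline{s})\big]=0$ only places $\phi'(E_X)$ in the centralizer of the negative part, which for $X=W$ is the whole $(m+n)$-dimensional space $\sum_{k}\mathbb{F}\partial_k$. The paper needs the additional reduction to zero-weight derivations with respect to the canonical torus $T_X$ (Lemma \ref{obl201010171415}, together with $\phi(T_X)=0$ from centerlessness) to cut this down to one dimension; without that step the coefficient of $\partial_{i_0}^{p^{s_{i_0}}}$ is not determined and spurious derivations could survive the climb.

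Third, your induction climbs ``from $\underline{1}$ up to $\underline{t}$ one index at a time,'' which does not terminate when $\underline{t}=\underline{\infty}$; the paper devotes a separate case to this, restricting $\phi$ to each finite $X(\underline{t})^{(2)}$, checking that the resulting data $a(t)_{ij}$, $\mu_t$, $\nu_t$, $D_t$ are independent of $\underline{t}$, and assembling the (possibly infinite) sum $\sum_{i,j}\lambda^{X}_{ij}\mathrm{ad}\,\partial_i^{p^{j}}$. Finally, even in the finite case the climb only shows that the residual derivation vanishes on $X(\underline{t})^{(2)}$; since $\widetilde{X}$ may be strictly larger (e.g.\ $X(\underline{t})$ itself), one still needs the step $\big[X(\underline{1})^{(2)},\phi\big(\widetilde{X}\cap X(\underline{s})\big)\big]\subset\phi\big(X(\underline{s})^{(2)}\big)=0$ and the triviality of the relevant centralizer to conclude $\phi(\widetilde{X})=0$. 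You correctly flag the inductive step as the main obstacle, but the proposal as it stands supplies neither the torus argument nor these two reductions, so it does not yet constitute a proof.
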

\begin{proof}
For any $\phi\in Q$, we can consider the following cases:\\

\noindent\textbf{Case 1:} $\underline{t}\neq\underline{\infty}$.
From Proposition \ref{ct11012110582.2} we know that $\phi$ leaves $X(\underline{1})^{(2)}$ invariant. In view of Remark
\ref{ct101120r2.1}  we may assume that $X(\underline{1})^{(2)}\subset
\ker \phi.$

Suppose  $\underline{1}\leq\underline{s}\leq\underline{t}$ to be maximal element satisfying
$X(\underline{s})^{(2)}\subset \ker \phi.$ Then
\begin{itemize}
\item[]$\Big[X(\underline{1})^{(2)},
\phi\big(\widetilde{X}\cap X(\underline{s})\big)\Big] \subset
\phi\big(X(\underline{s})^{(2)}\big)=0.$
\end{itemize}
In addition, for $X=H, HO$ or $SHO$,
\begin{itemize}
\item[]$\Big[X(\underline{1})^{(2)},
\phi\big(\widetilde{X}\cap \overline{X}(\underline{s})\big)\Big] \subset
\phi\big(X(\underline{s})^{(2)}\big)=0.$
\end{itemize}
Whence $\phi\big(\widetilde{X}\cap
X(\underline{s})\big)=0$ and $\phi\big(\widetilde{X}\cap
\overline{X}(\underline{s})\big)=0$.
The conclusion holds if $\underline{s}=\underline{t}$.
Suppose  $\underline{s}<\underline{t}$ and let ${i_0}$ be an index such that
 $s_{i_0}<t_{i_0}$.
Fix any $j \in \mathbf{I}_0$ and consider the   elements $E_X\in \widetilde{X}$ listed below:
\begin{itemize}
\item[] $\begin{array}{ll}
 E_W=x^{(p^{s_{i_0}}\varepsilon_{i_0})}\partial_j; &E_H=x^{((p^{s_{i_0}}+1)\varepsilon_{i_0})};
\\E_S=D_{i_0j_0}(x^{((p^{s_{i_0}}+1)\varepsilon_{i_0})}); &E_K=x^{(p^{s_{i_0}}\varepsilon_{i_0})};
\\E_{HO}=E_{SHO}=x^{((p^{s_{i_0}}+1)\varepsilon_{i_0})}; &E_{KO}=E_{SKO}=x^{(p^{s_{i_0}}\varepsilon_{i_0})}.
\end{array}$
\end{itemize}
By Proposition \ref{ct101120l2.3}, $E_X \not\in\ker \phi.$
However, a computation shows that
$\left[E_X, X^{-}(\underline{s})\right]\subset\ker \phi,$
whence
\begin{equation}\label{1102121432}\Big[\phi(E_X), X^{-}(\underline{s})\Big]=0.
\end{equation}

Let $T_X$ be the canonical torus of $X$.
By Lemma \ref{obl201010171415}  we  can assume that $\phi$ is a
zero weight-superderivations. Since
$X(\underline{t})$ is centerless, we have  $\phi(T_X)=0$.
 Thus we can obtain the following results.\\

\noindent\textit{Case 1:} Suppose $X=W$, $S$, $H$, $HO$ or $SHO$. (\ref{1102121432})
means $\phi(E_X) \in \sum_{k \in \mathbf{I}}\mathbb{F}\partial_k.$
By a direct computation, we can find  $\beta_{X} \in \mathbb{F}$
such that
\begin{itemize}
\item[] $\begin{array}{lll}
\phi(E_W)=\beta_W\partial_{i_0};&\phi(E_S)=\beta_S\partial_{j_0};& \phi(E_H)=\beta_{H}x_{i_0};\\
\phi(E_{HO})=\beta_{HO}x_{i_0};& \phi(E_{SHO})=\beta_{SHO}x_{i_0}.
 \end{array}$

\end{itemize}
 Then
$\phi-\beta_X\mathrm{ad}\partial_{i_0}^{p^{s_{i_0}}}$ vanishes on
$X(\underline{s})^{(2)}+\mathbb{F}E_X.$\\

\noindent\textit{Case 2:} Suppose $X=K$. From (\ref{1102121432}) we have $\phi(E_K)\in K(\underline{s})_{-2}$ and therefore,   $\phi(E_K)=2\beta_K\partial_m$ for some $\beta_K\in\mathbb{F}$.
Then $\phi-\beta_K\mathrm{ad}\partial_{i_0}^{p^{s_{i_0}}}$ vanishes on
$K(\underline{s})^{(1)}+\mathbb{F}E_K.$\\

\noindent\textit{Case 3:} Suppose $X=KO$ or $SKO$. From (\ref{1102121432})  we have $\phi(E_X)\in X(\underline{s})_{-2}$ and $\phi(E_X)=-2\beta_X\partial_{2m+1}$ for some  $\beta_X\in\mathbb{F}$. Then
$\phi-\beta_X\mathrm{ad}\partial_{i_0}^{p^{s_{i_0}}}$ vanishes on
$X(\underline{s})^{(2)}+\mathbb{F}E_X.$

Summarizing, one sees from Proposition   \ref{ct101120l2.3}   that $\phi$   vanishes on
$X(\underline{s}+\varepsilon_{i_0})^{(2)}$ modulo  $\beta_X\mathrm{ad}\partial_{i_0}^{p^{s_{i_0}}}$.
By induction on $\underline{s}$, we may assume that $X(\underline{t})^{(2)} \subset \ker \phi.$
It follows that $\phi(\widetilde{X})=0 $ and the proof  in this case is complete.\\

\noindent\textbf{Case 2:} $\underline{t}=\underline{\infty}$.
From
 Proposition \ref{ct11012110582.2} we
know that $\phi $ leaves  $X(\underline{t})^{(2)}$ invariant  for any $\underline{t}\not=
\underline{\infty}$. From Case 1  we can assume that
 \begin{itemize}
\item[]
$\phi|_{X(\underline{t})^{(2)}}= \mathrm{ad}D_t+
\sum_{i=1}^m\sum_{j=1}^{\infty}a(t)_{ij}\mathrm{ad}\partial_i^{p^j}
+\mu_{t}\delta_{X=HO}\Phi+\nu_{t}\delta_{X=SHO}\delta_{m=3}\Theta,$
 \end{itemize}
where $D_t\in X(\underline{t})^{(2)}_{-r} $
and $r$ is the depth of $X(\underline{t})^{(2)}$  and $a(t)_{ij}, \mu_{t}, \nu_{t}\in\mathbb{F}$.
 A direct computation shows that
 \begin{itemize}
\item[]
 $a(t)_{ij}=a(s)_{ij}\, \,   \mbox{ for } \underline{t}\leq \underline{s};$
\item[]
$\mu_{t}=\mu_{s}, \quad \nu_{t}=\nu_{s}, \quad D_t=D_s\,  \, \mbox{ for all } \, \underline{t},
\underline{s}\in \mathbb{N}_0^m.$
 \end{itemize}
Put $D_{\phi}:=D_t$, $\mu_{\phi}:=\mu_t$, $\nu_{\phi}:=\nu_t$ and
$\varphi:=\phi-\mathrm{ad}D_{\phi}-\mu_{\phi}\delta_{X=HO}\Phi-\nu_{\phi}\delta_{X=SHO}\delta_{m=3}\Theta$.
Then
 \begin{itemize}
\item[]
$\varphi|_{X(\underline{t})^{(2)}}=
\sum_{i=1}^m\sum_{j=1}^{\infty}a(t)_{ij}\partial_i^{p^j}.$
 \end{itemize}
For $i\in \overline{1, m}$ and $j>0$, choose $\lambda_{ij}^{X}\in \mathbb{F}$ such that
$\lambda_{ij}^{X}$ is the coefficient of $\frak{B}_X$ in
$\varphi(\frak{C}_X)$, where $\frak{B}_X$, $\frak{C}_X\in X(\underline{t})^{(2)}$.
Further information is listed below:

~~~~~\begin{tabular}{|l|l|l|}
 \multicolumn{3}{c}{}
    \\[5pt] \hline
     \multicolumn{1}{|c|}{$\lambda_{ij}^{X}$}&
     \multicolumn{1}{|c|}{$\frak{B}_X$}&
     \multicolumn{1}{|c|}{$\frak{C}_X$}\\
     \hline
      $\lambda_{ij}^{W}$&
     $x^{\omega}\partial_l$ &
     $x^{(p^{j}\varepsilon_i)}x^{\omega}\partial_l$
     \\
     \hline
      $\lambda_{ij}^{S}$&
     $D_{il}(x^{\omega})$ &
     $D_{il}(x^{(p^{j}\varepsilon_i)}x^{\omega})$
     \\
     \hline
      $\lambda_{ij}^{H}$&
     $x^{\omega}$ &
     $x^{(p^{j}\varepsilon_i)}x^{\omega}$
     \\
      \hline
      $\lambda_{ij}^{K}$&
     $1$ &
     $x^{(p^{j}\varepsilon_i)}$
     \\
     \hline
      $\lambda_{ij}^{HO}$&
     $x^{\omega}$ &
     $x^{(p^{j}\varepsilon_i)}x^{\omega}$
     \\
      \hline
      $\lambda_{ij}^{SHO}$&
     $x^{\omega-\langle{l}\rangle}$ &
     $x^{(p^{j}\varepsilon_i)}x^{\omega-\langle{l}\rangle}$, where $l\not=\tilde{i}$
     \\
      \hline
      $\lambda_{ij}^{KO}$&
     $x_{2m+1}$ &
     $x^{(p^{j}\varepsilon_i)}x_{2m+1}$
     \\
      \hline
      $\lambda_{ij}^{SKO}$&
     $x_{2m+1}+m\lambda(x_ix_{\tilde{i}})$ &
     $x^{(p^{j}\varepsilon_i)}x_{2m+1}+m\lambda(x^{((p^{j}+1)\varepsilon_i)}x_{\tilde{i}})$
     \\\hline
   \end{tabular}\\

Put
 \begin{itemize}
\item[]$\delta^X=\sum_{i=1}^m\sum_{j=1}^{\infty}\lambda_{ij}^X\mathrm{ad}\partial_i^{p^j}$.
 \end{itemize}
Clearly, $(\varphi-{\delta^X})(X(\underline{t})^{(2)})=0$.
It follows that $(\varphi-{\delta^X})(\widetilde{X})=0 $ and the proof  in this case is complete.

\end{proof}

By a  computation we are able to show that
 \begin{itemize}
 \item[] $\mathrm{Nor}_{W(\underline{t})}X(\underline{t})
=\mathrm{Nor}_{W(\underline{t})}X(\underline{t})^{(1)}$ for $X=S, H$ or $K$;
\item[] $\mathrm{Nor}_{W(\underline{t})}X(\underline{t})
=\mathrm{Nor}_{W(\underline{t})}X(\underline{t})^{(1)}=
\mathrm{Nor}_{W(\underline{t})}X(\underline{t})^{(2)}$ for $X=SHO$ or $SKO$.
\end{itemize}
 Further information is listed below (c.f. \cite{BLL,Fu-Zhang-Jiang,Liu-Zhang-Wang,Liu-Yuan-1,Ma-Zhang,Wang-Zhang,Zhang-Zhang}):\\

\begin{tabular}{|l|l|l|l|l|l|l|l|l|}
 \multicolumn{8}{c}{Table 3.1: Normalizer of $X(\underline{t}) $ in $W(\underline{t})$}
    \\[5pt] \hline
     \multicolumn{1}{|c|}{$X$}&
     \multicolumn{1}{|c|}{$S$}&
     \multicolumn{1}{|c|}{$H$}&
     \multicolumn{1}{|c|}{$K$}&
     \multicolumn{1}{|c|}{$HO$}&
     \multicolumn{1}{|c|}{$SHO$}&
     \multicolumn{1}{|c|}{$KO$}&
     \multicolumn{1}{|c|}{$SKO$}
     \\ \hline
     {\small Nor}&
     $\overline{S}$ &
     $ \overline{H} \oplus\mathbb{F}\mathfrak{D}$ &
     $K$&
     $\overline{HO}\oplus\mathbb{F}\mathfrak{D}$&
     $\overline{SHO}\oplus\mathbb{F}\mathfrak{D}$&
     $KO$&
     $SKO\oplus \mathbb{F}x_{1}x_{\tilde{1}}$
  \\\hline
   \end{tabular}\\

\noindent Hereafter,
\begin{itemize}
\item[]
 $\mathfrak{D}=\sum_{i\in \mathbf{I}}x_{i}\partial_{i}$,
 the degree derivation of
$X(\underline{t})$, where $X=S, H, HO$ or $SHO$.
 \end{itemize}

Let $M_{m\times \infty}$ be the vector space of all $m\times \infty$ matrices over $\mathbb{F}$, that is
\begin{itemize}
\item[]$M_{m\times \infty}:=\left
\{\sum_{i=1}^m\sum_{j=1}^{\infty}\mathbb{F}e_{ij} \mid e_{ij} \mbox{ is the unit of  } m\times \infty \mbox{ matrix }\right\},$
\end{itemize}
which is regarded as  an abelian
subalgebra of $\mathrm{Der}\big(L(\underline{t})\big)$ by letting
\begin{itemize}
\item[]
$[e_{ij}, D]=[\partial_i^{p^j}, D],$ for any $D\in W(\underline{t})$,
 \end{itemize}
where $L=X$, $X^{(1)}$, or $X^{(2)}.$

Put
 $[e_{ij}, \Phi]=[e_{ij}, \Theta]=0.$
\begin{theorem}\label{101123ctc1}
\begin{equation*}\begin{split}
\mathrm{Der}\big(L(\underline{t})\big)\cong\left(\mathrm{Nor}_{W(\underline{t})}
L(\underline{t})\right)\oplus
\delta_{X=HO}\mathbb{F}\Phi\oplus\delta_{L=SHO^{(2)}}\delta_{m=3}\mathbb{F}\Theta
\oplus  M_{m\times \infty},
\end{split}
\end{equation*}
where $L=X$, $X^{(1)}$, or $X^{(2)}.$
\end{theorem}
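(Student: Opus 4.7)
The plan is to combine Proposition \ref{ct101120l2.1} and Lemma \ref{101123ctt2.1} to decompose any $\phi \in \mathrm{Der}\big(L(\underline{t})\big)$ into the four summands on the right, and then to verify that the sum is direct. First I would regard $\phi$ as a superderivation $L(\underline{t}) \to W(\underline{t})$. Since $L(\underline{t})^- = X(\underline{t})^-$ in all three cases $L = X, X^{(1)}, X^{(2)}$, and since the argument of Proposition \ref{ct101120l2.1} only uses information about the negative part (the elements $\partial_j$, and, when $X = K, KO, SKO$, also $1$ or the $x_r$ in $L_{-1}$), the same argument applies with $L$ in place of $X$ and produces $D_0 \in W(\underline{t})$ such that $\psi := \phi - \mathrm{ad}\,D_0$ vanishes on $L(\underline{t})^-$.

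Next, Lemma \ref{101123ctt2.1} applied with $\widetilde{X} := L(\underline{t})$ identifies $\psi$ as
\[
\psi = \mathrm{ad}\,\ell + \sum_{i=1}^{m}\sum_{j=1}^{\infty} c_{ij}\,\mathrm{ad}\,\partial_i^{p^j} + \mu\,\delta_{X=HO}\Phi + \nu\,\delta_{L=SHO^{(2)}}\delta_{m=3}\Theta
\]
for some $\ell \in L(\underline{t})_{-r}$ and scalars $c_{ij}, \mu, \nu \in \mathbb{F}$. Setting $D := D_0 + \ell$ and identifying the matrix $(c_{ij})$ with an element of $M_{m\times\infty}$ delivers the required decomposition of $\phi$ into four pieces. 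The element $D$ must land in $\mathrm{Nor}_{W(\underline{t})} L(\underline{t})$: by hypothesis $\phi$ stabilizes $L(\underline{t})$, while $M_{m\times\infty}$, $\Phi$, and $\Theta$ each stabilize $L(\underline{t})$ by construction (the first because $[\partial_i^{p^j}, L(\underline{t})] \subseteq L(\underline{t})$, the latter two by the formulas preceding Lemma \ref{101123ctt2.1}), so $\mathrm{ad}\,D$ stabilizes $L(\underline{t})$ as well, which is exactly the normalizer condition.

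The remaining task is to verify directness of the four-term sum. The assignment $D \mapsto \mathrm{ad}\,D$ on $\mathrm{Nor}_{W(\underline{t})} L(\underline{t})$ is injective because $C_{W(\underline{t})}\big(L(\underline{t})\big) = 0$; no nonzero combination $\sum c_{ij}\,\mathrm{ad}\,\partial_i^{p^j}$ (with $j \geq 1$) can coincide with some $\mathrm{ad}\,E$ for $E \in W(\underline{t})$, since evaluating both sides on the test elements $\mathfrak{C}_X$ from the table in the proof of Lemma \ref{101123ctt2.1} produces coefficients that cannot be realized inside $W(\underline{t})$; and the outer nature of $\Phi$ and $\Theta$ recorded in Remark \ref{ct101120r2.1} separates them from the other summands. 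The main obstacle I anticipate is the bookkeeping in this independence check, together with confirming case by case that the reductions of Proposition \ref{ct101120l2.1} and Lemma \ref{101123ctt2.1} transfer uniformly to all three choices $L = X, X^{(1)}, X^{(2)}$; because these algebras agree in negative degree and share the same $(2)$-nd derived subalgebra, both reductions only see data common to all three and thus go through without change.
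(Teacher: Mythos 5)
Your proposal is correct and follows exactly the route the paper takes: its proof of this theorem is the single sentence that it is a direct consequence of Proposition \ref{ct101120l2.1} and Lemma \ref{101123ctt2.1}, and you invoke precisely these two results in the same order (first subtract an inner part to kill the negative degrees, then identify what remains, then check directness). The extra detail you supply on the normalizer condition and the independence of the four summands is implicit in the paper's ``direct result'' claim, so there is nothing genuinely different to compare.
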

\begin{proof}
It is a direct result of  Proposition \ref{ct101120l2.1} and  Lemma
\ref{101123ctt2.1}.

\end{proof}


\section{ Outer superderivations }
In this section, let $X=W$, $S$, $H$, $K$, $HO$, $SHO$, $KO$ or
$SKO$ and $L=X(\underline{t})$, $X(\underline{t})^{(1)}$ or $X(\underline{t})^{(2)}$. Denote by $\mathrm{Der_{out}}(L):=\mathrm{Der}(L)/\mathrm{ad}(L)$
 the outer superderivation algebra of $L$.  Write $\delta'_{i,j}=1$ if $i\equiv j\pmod{p}$; $\delta'_{i, j}=0$ otherwise.

For future reference, we establish  the following Lie algebras.
\begin{itemize}
\item[$\mathrm{(1)}$] Put $\frak{G}^{X}(\underline{t}):=M_{m\times \infty}$
 when $X=W, K$ or $KO$. For $\underline{t}\not=\underline{\infty}$, the direct sum of Lie algebras
  \begin{itemize}
\item[]${\frak{G}}_1^{K}(\underline{t}):=\frak{G}^{K}(\underline{t})\oplus \delta'_{m-n, 3}\mathbb{F}f^{K}$
 \end{itemize}
is an abelian Lie algebra.

\item[$\mathrm{(2)}$] Write the direct sum of Lie algebras ${\frak{G}}^{S}(\underline{t}):=M_{m\times \infty}\oplus \mathbb{F}f^S$
and let $V^S:=\mathrm{span}_{\mathbb{F}}\{g_i^S\mid i\in \overline{1, m}\}$ be  an abelian Lie algebra.
Then, for $\underline{t}\not=\underline{\infty}$, the semi-direct sum
 \begin{itemize}
\item[]${\frak{G}}^{S}_1(\underline{t}):={\frak{G}}^{S}(\underline{t})\ltimes_{\mathrm{ad}} V^S$
 \end{itemize}
is a  Lie algebra
with multiplication $[M_{m\times \infty}, V^S]=0$ and  $ [f^S, g_i^S]=g_i^S.$
\item[$\mathrm{(3)}$] Write the direct sum of Lie algebras  $\overline{\frak{G}}^{H}(\underline{t}):=M_{m\times \infty}\oplus \mathbb{F}f_1^H$
 and let $V^H:=\mathrm{span}_{\mathbb{F}}\{g_i^H\mid i\in \overline{1, m}\}$ be an  abelian Lie algebras.
Then, the semi-direct sum
 \begin{itemize}
\item[]${\frak{G}}^{H}(\underline{t}):=\overline{\frak{G}}^{H}(\underline{t})\ltimes_{\mathrm{ad}} \delta_{\underline{t}\not=\underline{\infty}}V^H$
 \end{itemize}
 is a Lie algebra
with multiplication $[M_{m\times \infty}, V^H]=0$ and $ [f^H, g_i^H]=-2g_i^H.$ Moreover, for   $\underline{t}\not=\underline{\infty}$,
 the semi-direct sum
 \begin{itemize}
\item[]${\frak{G}}_1^{H}(\underline{t}):=\frak{G}^{H}(\underline{t})\ltimes_{\mathrm{ad}} \mathbb{F}f_2^{H}$
 \end{itemize}
is a  Lie algebra
with multiplication
$[M_{m\times \infty}, f_2^H]=[V^H, f_2^H]=0$ and $ [f_1^H, f_2^H]=(n-m-2)f_2^H$.
\item[$\mathrm{(4)}$]Let $\overline{V}^{HO}:=\mathrm{span}_{\mathbb{F}}\{f_1^{HO}, f_2^{HO}\}$ be a Lie algebra given by
$[f_1^{HO}, f_2^{HO}]=-2f_2^{HO}$.
Then the direct sum of Lie algebras
$\overline{\frak{G}}^{HO}(\underline{t}):=M_{m\times \infty}\oplus \overline{V}^{HO}$
is a Lie algebra.
 Let $V^{HO}:=\mathrm{span}_{\mathbb{F}}\{g_i^{HO}\mid i\in \overline{1, m}\}$ be an abelian Lie algebra.
   Then, the semi-direct sum
    \begin{itemize}
\item[]${\frak{G}}^{HO}(\underline{t}):=\overline{\frak{G}}^{HO}(\underline{t})\ltimes_{\mathrm{ad}} \delta_{\underline{t}\not=\underline{\infty}}V^{HO}$
 \end{itemize}
is a Lie algebra
with multiplication  $[M_{m\times \infty}, V^{HO}]=[f_2^{HO}, V^{HO}]=0$ and $[f_1^{HO}, g_i^{HO}]=-2g_i^{HO}$.
\end{itemize}
\begin{theorem}\label{ctt1102241908} Let $X=W$, $S$, $H$, $K$, $HO$ or $KO$.
The outer superderivation algebras are as follows:
\begin{itemize}
\item[]
$\mathrm{Der_{out}}\big(X(\underline{t})\big)\cong {\frak{G}}^{X}(\underline{t});$
\item[]
$\mathrm{Der_{out}}\big(X(\underline{t})^{(1)}\big)\cong {\frak{G}}_1^{X}(\underline{t}), \mbox{ where } X=S, H \mbox{ or } K,$
and $\underline{t}\not=\underline{\infty}$.
\end{itemize}
In particular, they are all Lie algebras. Furthermore, $\mathrm{Der_{out}}\big(K(\underline{t})^{(1)}\big)$, $\mathrm{Der_{out}}\big(H(\underline{\infty})\big)$ and $\mathrm{Der_{out}}\big(X(\underline{t})\big)$ are  abelian when $X=W, S, H, K$ or $ KO$.
\end{theorem}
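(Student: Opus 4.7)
The plan is to derive the theorem as a direct consequence of Theorem \ref{101123ctc1} combined with Table 3.1. Every $L$ under consideration contains a simple ideal (see Remark \ref{ctr110310} together with the simplicity statements of Section 1) and is centerless; moreover $C_{W(\underline{t})}(L)=0$ by a short computation using that $L$ contains all $\partial_i$ and some $x_k\partial_l$. Hence the canonical map $\mathrm{Nor}_{W(\underline{t})}(L)\to\mathrm{Der}(L)$ is injective and $\mathrm{ad}(L)$ is identified with $L$ sitting inside $\mathrm{Nor}_{W(\underline{t})}(L)$. Factoring this copy of $L$ out of the decomposition supplied by Theorem \ref{101123ctc1} yields, as a vector superspace,
\[
\mathrm{Der_{out}}(L)\;\cong\;\mathrm{Nor}_{W(\underline{t})}(L)/L\;\oplus\;\delta_{X=HO}\mathbb{F}\Phi\;\oplus\;\delta_{L=SHO(3,3;\underline{t})^{(2)}}\mathbb{F}\Theta\;\oplus\;M_{m\times\infty}.
\]
The main task is then to identify each right-hand piece with the generators occurring in the definitions of $\mathfrak{G}^X(\underline{t})$ and $\mathfrak{G}^X_1(\underline{t})$.

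Using Table 3.1 I proceed case by case. For $X=W,K,KO$ the normalizer equals $X$ itself, so the quotient vanishes and $\mathrm{Der_{out}}(X(\underline{t}))\cong M_{m\times\infty}=\mathfrak{G}^X$. For $X=S$ the quotient $\overline{S}/S$ is one-dimensional, spanned by a representative of divergence $1$, which I label $f^S$. For $X=H$ the normalizer $\overline{H}\oplus\mathbb{F}\mathfrak{D}$ contributes the degree-derivation class $f_1^H:=\mathfrak{D}\bmod H$, while $\overline{H}/H$ is zero when $\underline{t}=\underline{\infty}$ and is $m$-dimensional when $\underline{t}\neq\underline{\infty}$, supplying $V^H$. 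For $X=HO$ the two basis vectors of $\overline{V}^{HO}$ are realized by $f_1^{HO}:=\mathfrak{D}\bmod HO$ (coming from $\mathrm{Nor}_W(HO)/HO$) and $f_2^{HO}:=\Phi$ (coming from the separate $\mathbb{F}\Phi$ summand), and $V^{HO}$ arises from $\overline{HO}/HO$ in the finite-dimensional regime. For the derived cases $L=X(\underline{t})^{(1)}$ with $X=S,H,K$ and $\underline{t}\neq\underline{\infty}$, there is a further contribution from $X/X^{(1)}$: this produces the $m$-dimensional $V^S$ for $S$, the one-dimensional $\mathbb{F}f_2^H$ for $H$, and $\delta'_{m-n,3}\mathbb{F}f^K$ for $K$ (the Kronecker factor recording that the class of $x^{\omega}$ spans $K/K^{(1)}$ precisely when $n-m\equiv 3\pmod p$).

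The Lie brackets prescribed in the definitions of $\mathfrak{G}^X$ and $\mathfrak{G}^X_1$ are then verified by direct commutator computations, all of which reduce to weight counts in the principal grading of Remark \ref{ctr101120}. For instance $[f_1^H,g_i^H]=-2g_i^H$ and $[f_1^{HO},f_2^{HO}]=-2f_2^{HO}$ express that $\mathfrak{D}$ acts on each representative by multiplication by its $\mathbb{Z}$-degree (with $\Phi$ itself of $\mathbb{Z}$-degree $-2$), while $[f_1^H,f_2^H]=(n-m-2)f_2^H$ and $[f^S,g_i^S]=g_i^S$ follow from the same count on the top-degree representatives of $\overline{H}/H$ and $S/S^{(1)}$. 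Commutativity of $M_{m\times\infty}$ with every other generator follows from the fact that each $\mathrm{ad}\,\partial_i^{p^j}$ annihilates the chosen representatives, together with the built-in prescription $[e_{ij},\Phi]=[e_{ij},\Theta]=0$. Evenness of every generator---which is why the outer algebras are honest Lie algebras and not merely Lie superalgebras---is inherited from the evenness of divergence, of $\mathfrak{D}$, and of $\Phi,\Theta$: the odd part of $\mathrm{Nor}_W(L)$ is absorbed into $L$ itself. The final abelian claim is then immediate by inspection of the explicit brackets, since $\mathfrak{G}^W,\mathfrak{G}^K,\mathfrak{G}^{KO},\mathfrak{G}^S,\mathfrak{G}^K_1$ and $\mathfrak{G}^H(\underline{\infty})$ are each a direct sum of $M_{m\times\infty}$ with at most a one-dimensional summand.

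I expect the principal obstacle to lie in the case-by-case choice of explicit coset representatives for $f^X$, $f_i^X$ and $g_i^X$, and the verification that they are linearly independent modulo $\mathrm{ad}(L)$ while satisfying exactly the prescribed brackets. This step is essentially bookkeeping with the divergence formula, the principal grading, and the explicit forms of $D_H$, $D_{HO}$ and $D_{KO}$; once these identifications are pinned down, both the isomorphism statements and the abelianness assertions drop out of the uniform formula displayed above.
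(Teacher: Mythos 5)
Your proposal is correct and follows essentially the same route as the paper: the paper likewise deduces the theorem from Theorem \ref{101123ctc1} and Table 3.1 together with the explicit decompositions of $\overline{S}(\underline{t})$, $\overline{H}(\underline{t})$, $K(\underline{t})$ and $\overline{HO}(\underline{t})$ relative to $L$ and its derived algebras, leaving the bracket identifications to ``a direct computation.'' Your added remarks on $C_{W(\underline{t})}(L)=0$ and the evenness of the coset representatives only make explicit what the paper leaves implicit.
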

\begin{proof}
From \cite{Fu-Zhang-Jiang,Liu-Zhang-Wang,Ma-Zhang,Wang-Zhang,Zhang-Zhang} we have
\begin{itemize}
\item[$\mathrm{(1)}$]
$\overline{S}(\underline{t})=S(\underline{t})\oplus\mathbb{F}\frak{D}$

\quad\quad\,$=S(\underline{t})^{(1)}\oplus\delta_{\underline{t}\not=\underline{\infty}}\sum_{i\in \mathbf{I}_0}\mathbb{F}x^{(\pi-\pi_i\varepsilon_i)}x^\omega\partial_i\oplus\mathbb{F}\frak{D}$;
\item[$\mathrm{(2)}$]
$\overline{H}(\underline{t})=H(\underline{t})\oplus\delta_{\underline{t}\not=\underline{\infty}}\sum_{i\in \mathbf{I}_0}\mathbb{F}x^{(\pi_i\varepsilon_i)}\partial_{i'}$

\quad\quad\,$=H(\underline{t})^{(1)}\oplus\delta_{\underline{t}\not=\underline{\infty}}\big(\mathbb{F}x^{\pi}x^\omega\oplus\sum_{i\in \mathbf{I}_0}\mathbb{F}x^{(\pi_i\varepsilon_i)}\partial_{\tilde{i}}\big)$;
\item[$\mathrm{(3)}$]
$K(\underline{t})=K(\underline{t})^{(1)}\oplus\delta_{\underline{t}\not=\underline{\infty}}\delta'_{n-m,3}
\mathbb{F}x^{(\pi)}x^\omega$;
\item[$\mathrm{(4)}$]
$\overline{HO}(\underline{t})=HO(\underline{t})\oplus\delta_{\underline{t}\not=\underline{\infty}}\sum_{i\in \mathbf{I}_0}\mathbb{F}x^{(\pi_i\varepsilon_i)}\partial_{\tilde{i}}$.
\end{itemize}
From Table 3.1 and Theorem \ref{101123ctc1}, by a direct computation we have the desired results.
\end{proof}

The structures of $SHO(\underline{t})$ and $SKO(\underline{t})$ are very complicated. Now we consider their outer superderivations.
\begin{itemize}
\item[$\mathrm{(1)}$]Let $\overline{V}^{SHO}:=\mathrm{span}_{\mathbb{F}}\{f_1^{SHO}, f_2^{SHO}\}$ be an abelian Lie algebra and $V^{SHO}:=\mathrm{span}_{\mathbb{F}}\{g_i^{SHO}\mid i\in \overline{1, m}\}$
be  an abelian  Lie superalgebra with $V^{SHO}_{\bar{0}}=0$.
Clearly, the direct sum of Lie algebra
$\overline{\frak{G}}^{SHO}(\underline{t}):=M_{m\times \infty}\oplus\overline{V}^{SHO}$
 is an abelian  Lie algebra. Then the semi-direct sum
  \begin{itemize}
\item[]${\frak{G}}^{SHO}(\underline{t}):=\overline{\frak{G}}^{SHO}(\underline{t})\ltimes_{\mathrm{ad}} \delta_{\underline{t}\not=\underline{\infty}}V^{SHO}$
 \end{itemize}
  is a  Lie (super)algebra with multiplication  $[M_{m\times \infty}, V^{SHO}]=[f_2^{SHO}, V^{SHO}]=0$ and $[f_1^{SHO}, g_i^{SHO}]=-2g_i^{SHO}$.

For $\underline{t}=\underline{\infty}$, the semi-direct sum
 \begin{itemize}
\item[]$\overline{\frak{G}}_1^{SHO}(\underline{t}):={\frak{G}}^{SHO}(\underline{t})\ltimes_{\mathrm{ad}} \mathbb{F}f_3^{SHO}$
 \end{itemize}
is a Lie algebra
with multiplication
$[M_{m\times \infty}, f_3^{SHO}]=0$; $[f_1^{SHO}, f_3^{SHO}]=(m-2)f_3^{SHO}$; $[f_2^{SHO}, f_3^{SHO}]=f_3^{SHO}.$

For $\underline{t}\not=\underline{\infty}$, let $\overline{\Lambda(m)}:=\Lambda(m)$ be an abelian Lie superalgebra by letting
$\overline{\Lambda(m)}_{\bar{i}}:=\Lambda(m)_{\overline{{i+1}}}$, $i=0, 1.$ Then the semi-direct sum
 \begin{itemize}
\item[]${\frak{G}}_1^{SHO}(\underline{t}):={\frak{G}}^{SHO}(\underline{t})\ltimes_{\mathrm{ad}} \overline{\Lambda(m)}$
 \end{itemize} is
 a  Lie superalgebra, having a $\mathbb{Z}_2$-grading
structure   induced by
${\frak{G}}^{SHO}(\underline{t})$ and $\overline{\Lambda(m)}$,
with multiplication
\begin{itemize}
\item[]
$[f_1^{SHO}, x^u]=(2|u|-m-2)x^u;\quad [f_2^{SHO}, x^u]=x^u;\quad [x^u, x^v]=0;$
\item[]$[M_{m\times \infty}, x^u]=0;\quad[g_i^{SHO}, x^u]=(-1)^{(i, u)}\delta_{\tilde{i}\in u}x^{u-\langle \tilde{i}\rangle},$
 \end{itemize}
for all $x^u, x^v\in \overline{\Lambda(m)},$ where $(-1)^{(i, u)}$ is
determined by the equation $\partial_{i'}(x^u)=(-1)^{(i,
u)}x^{u-\langle i'\rangle}.$

Let $V_1^{SHO}=\mathrm{span}_{\mathbb{F}}\big\{f_4^{SHO}, \delta_{m=3}f_5^{SHO}\big\}$ and
 \begin{itemize}
\item[]${\frak{G}}_2^{SHO}(\underline{t}):=\frak{G}_1^{SHO}(\underline{t})\oplus V_1^{SHO}$
 \end{itemize}
 be a $\mathbb{Z}_2$-grading space with
\begin{itemize}
\item[]
$\big({\frak{G}}_2^{SHO}(\underline{t})\big)_{\bar{0}}=\big({\frak{G}}_1^{SHO}(\underline{t})\big)_{\bar{0}}\oplus V_1^{SHO}$; $\big({\frak{G}}_2^{SHO}(\underline{t})\big)_{\bar{1}}=\big({\frak{G}}_1^{SHO}(\underline{t})\big)_{\bar{1}}.$
\end{itemize}
Then ${\frak{G}}_2^{SHO}(\underline{t})$ is a Lie superalgebra by letting
 \begin{itemize}
\item[]$[f_1^{SHO}, f_4^{SHO}]=-4f_4^{SHO};\quad [f_2^{SHO}, f_4^{SHO}]=2f_4^{SHO};$
\item[]$[M_{m\times \infty}, f_4^{SHO}]=[V^{SHO}, f_4^{SHO}]=[\overline{\Lambda(m)}, f_4^{SHO}]=0;$
\item[]$[f_1^{SHO}, f_5^{SHO}]=[f_2^{SHO}, f_5^{SHO}]=-f_5^{SHO};$
\item[]$[M_{m\times \infty}, f_5^{SHO}]=[V^{SHO}, f_5^{SHO}]=0;$
\item[]$[\partial_{j'}(x^\omega), f_5^{SHO}]=g_j^{SHO}$,  $[x_{j'}, f_5^{SHO}]=0;$ { for  } $j=1,2,3;$
\item[]$[1, f_5^{SHO}]=0;$ $ [x^\omega, f_5^{SHO}]=2^{-1}(3f_2^{SHO}+f_1^{SHO});$ $ [f_4^{SHO}, f_5^{SHO}]=1,$
 \end{itemize}

\item[$\mathrm{(2)}$]
 Write the direct sum of Lie algebras ${\frak{G}}^{SKO}(\underline{t}):=M_{m\times \infty}\oplus \mathbb{F}f^{SKO}$.

For $\underline{t}=\underline{\infty}$,
the semi-direct sum
\[\overline{\frak{G}}_1^{SKO}(\underline{t})= \left\{\begin{array}{ll}
{\frak{G}}^{SKO}(\underline{t})\ltimes_{\mathrm{ad}} \mathbb{F}f_1^{SKO}&m\lambda-m+2\equiv 0\pmod{p} \mbox{ or } \lambda=1;\\
{\frak{G}}^{SKO}(\underline{t})&\mbox{otherwise}.
\end{array}\right.\]
 is a Lie algebra with multiplication
$
[M_{m\times \infty}, f_1^{SKO}]=0$ and $ [f^{SKO}, f_1^{SKO}]=f_1^{SKO}.
$

For $\underline{t}\not=\underline{\infty}$,
 we introduce some symbols for simplicity. Let
$(i_1,i_2,\ldots,i_k)$ be a $k$-tuple of pairwise distinct positive
integers.
 Write
the  integer
\begin{itemize}
\item[]$l(\lambda, m):=\sum_{k\in \mathfrak{S}_{0}(\lambda,
m)}{m\choose k}+\sum_{k\in
\mathfrak{S}_{2}(\lambda,
m)}{m\choose k},$
\end{itemize}
where
$
\mathfrak{S}_{l}(\lambda, m):=\{k\in \overline{0,m}\mid
m\lambda-m+2k+l=0 \in\mathbb{F}\}.
$
Let $V^{SKO}:=V^{SKO}_{\bar{0}}\oplus V^{SKO}_{\bar{1}}$ be a $\mathbb{Z}_{2}$-graded
vector space where
 \begin{equation*}\begin{split}
&V^{SKO}_{\bar{0}}:= V_{01}\oplus V_{02},\quad V^{SKO}_{\bar{1}}:=V_{11}\oplus V_{12};\\
&V_{01}:=\mathrm{span}_{\mathbb{F}}\{X_{i_1,\ldots,i_r}\mid
r\in\mathfrak{S}_{2}(\lambda, m),(i_{1}, \ldots, i_{r})\in
\mathbf{J}(r), m-r\;\mbox{ is odd}\};\\
&V_{02}:=\mathrm{span}_{\mathbb{F}}\{Y_{j_1,\ldots,j_l}~\mid
 l\in\mathfrak{S}_{0}(\lambda,
m),(j_{1}, \ldots, j_{l})\in \mathbf{J}(l), m-l\;\mbox{ is even}\};\\
&V_{11}:=\mathrm{span}_{\mathbb{F}}\{X_{i_1,\ldots,i_r}\mid
r\in\mathfrak{S}_{2}(\lambda, m),(i_{1}, \ldots, i_{r})\in
\mathbf{J}(r), m-r\;\mbox{ is even}\};\\
&V_{12}:=\mathrm{span}_{\mathbb{F}}\{Y_{j_1,\ldots,j_l}~\mid
l\in\mathfrak{S}_{0}(\lambda,
m),\,(j_{1}, \ldots, j_{l})\in \mathbf{J}(l), m-l\;\mbox{ is odd}\};\\
&\mathbf{J}(0):=\emptyset;\quad
\mathbf{J}(r):=\{(i_{1}, \ldots, i_{r})\mid  1\leq i_{1}<\cdots< i_{r}  \leq m\}.
\end{split}
\end{equation*}
Moreover, $V^{SKO}$ is a Lie superalgebra by letting
\begin{itemize}
\item[]$[V_{01}+V_{11},V_{01}+V_{11}]=[V_{02}+V_{12},V_{02}+V_{12}]=0;$
\item[]
$[X_{i_1,\ldots,i_r},Y_{j_1,\ldots,j_l}]=(-1)^{r(m-r+1)}[Y_{j_1,\ldots,j_l},X_{i_1,\ldots,i_r}]$
\item[]=$\left\{\begin{array}{ll}0,  &\mbox{ }(i_1,\ldots,i_r)\neq
(j_{l+1},\ldots,j_m)\\
\delta'_{m\lambda,-1}\mathrm{sgn}(\tilde{i}_{r+1},\ldots,\tilde{i}_m,\tilde{i}_2,\ldots,\tilde{i}_r)\cdot 1,
&\mbox{ }(i_1,\ldots,i_r)= (j_{l+1},\ldots,j_m).
\end{array}\right.$
\end{itemize}
Then, the semi-direct sum
\begin{itemize}
\item[]${\frak{G}}_1^{SKO}(\underline{t}):={\frak{G}}^{SKO}(\underline{t})\ltimes_{\mathrm{ad}} V^{SKO}$
\end{itemize}
  is a  Lie superalgebra, having  a
$\mathbb{Z}_2$-grading  structure  induced by
$V^{SKO}$, with multiplication
$[M_{m\times \infty}, V^{SKO}]=0$, $\mathrm{ad}f^{SKO}|_{V^{SKO}}=\mathrm{id}_{V^{SKO}}$.

Moreover, the semi-direct sum
\begin{itemize}
\item[]${\frak{G}}_2^{SKO}:={\frak{G}}_1^{SKO}\ltimes_{\mathrm{ad}} \delta'_{m\lambda,-1}\mathbb{F} f_1^{SKO}$
\end{itemize}
  is a  Lie superalgebra, having  a
$\mathbb{Z}_2$-grading  structure  induced by
${\frak{G}}_1^{SKO}$, with multiplication
 $[M_{m\times \infty}, f_1^{SKO}]=[V^{SKO}, f_1^{SKO}]=0$; $[f^{SKO}, f_1^{SKO}]=2f_1^{SKO}$.
Note that ${\frak{G}}_2^{SKO}={\frak{G}}_1^{SKO}$ is a Lie algebra when $\delta'_{m\lambda,-1}=0$.
\end{itemize}
\begin{theorem}\label{ctt1102242003}Let $X=SHO$ or $SKO$. The outer superderivation algebras are as follows:
\begin{itemize}
\item[]$\mathrm{Der_{out}}\big(X(\underline{t})\big)\cong {\frak{G}}^{X}(\underline{t})$;
\item[]$\mathrm{Der_{out}}\big(X(\underline{t})^{(1)}\big)\cong \overline{\frak{G}}_1^{X}(\underline{t})$, when $\underline{t}=\underline{\infty};$
\item[]$\mathrm{Der_{out}}\big(X(\underline{t})^{(i)}\big)\cong {\frak{G}}_i^{X}(\underline{t}),$ $i=1, \; 2$ when $\underline{t}\not=\underline{\infty}.$
\end{itemize}
Moreover, $\mathrm{Der_{out}}\big(X(\underline{t})\big)$, $\mathrm{Der_{out}}\big(X(\underline{\infty})^{(1)}\big)$ and
 $\mathrm{Der_{out}}\big(SKO(\underline{t})^{(i)}\big)$, $i=1, 2$
in the case $\delta'_{m\lambda, -1}=0$ are all Lie algebras. In addition, $\mathrm{Der_{out}}\big(X(\underline{\infty})\big)$ is abelian.
\end{theorem}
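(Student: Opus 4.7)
The plan is to invoke Theorem \ref{101123ctc1} to obtain
\[
\mathrm{Der}\bigl(L(\underline{t})\bigr)\cong \mathrm{Nor}_{W(\underline{t})}L(\underline{t})\oplus \delta_{X=HO}\mathbb{F}\Phi\oplus \delta_{L=SHO^{(2)}}\delta_{m=3}\mathbb{F}\Theta\oplus M_{m\times\infty},
\]
and then to quotient by $\mathrm{ad}(L(\underline{t}))$. Concretely, for each choice $L=X(\underline{t}),\, X(\underline{t})^{(1)},\, X(\underline{t})^{(2)}$, I will determine the coset space $\mathrm{Nor}_{W(\underline{t})}L/\mathrm{ad}(L)$ by combining Table~3.1 with an analogue, for $SHO$ and $SKO$, of the explicit decompositions listed at the opening of the proof of Theorem~\ref{ctt1102241908}. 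These decompositions expose exactly which elements of the normalizer (and which further coset representatives inside $L$ itself) survive modulo the adjoint action to produce outer derivations.

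For $X=SHO$, Table~3.1 yields $\mathrm{Nor}_{W}SHO=\overline{SHO}\oplus \mathbb{F}\mathfrak{D}$, which sits strictly above the chain $SHO\supset SHO^{(1)}\supset SHO^{(2)}$. I will identify $f_1^{SHO}$ with the class of the degree derivation $\mathfrak{D}$ and $f_2^{SHO}$ with a class from $\overline{SHO}/SHO$; the elements $g_i^{SHO}$ correspond to coset representatives in $SHO(\underline{t})/SHO(\underline{t})^{(1)}$, and the vector space $\overline{\Lambda(m)}$ (in the finite-dimensional case) or $f_3^{SHO}$ (in the infinite case) to $SHO(\underline{t})^{(1)}/SHO(\underline{t})^{(2)}$. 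Finally $f_4^{SHO}$ and, when $m=3$, $f_5^{SHO}$ arise from combining the descent to $SHO^{(2)}$ with the exceptional derivation $\Theta$ already singled out in Theorem~\ref{101123ctc1}. The bracket relations in the definitions of $\mathfrak{G}^{SHO}_1$ and $\mathfrak{G}^{SHO}_2$ are then verified by computing commutators of these representatives in $\mathrm{Der}$ modulo $\mathrm{ad}(SHO^{(i)})$, using the formula $[a,b]_{HO}=T_H(a)(b)$.

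For $X=SKO$, the approach is analogous but substantially more delicate. From Table~3.1, $\mathrm{Nor}_{W}SKO=SKO\oplus \mathbb{F}x_1 x_{\tilde{1}}$, and the class of $x_1 x_{\tilde{1}}$ yields $f^{SKO}$. The real work lies in constructing the spaces $V_{01},V_{02},V_{11},V_{12}$: each tuple $(i_1,\ldots,i_r)$ with $r\in\mathfrak{S}_2(\lambda,m)$ (respectively $l\in\mathfrak{S}_0(\lambda,m)$) indexes a homogeneous monomial in $\mathcal{O}(m,m+1;\underline{t})$ whose $\mathrm{div}_\lambda$ vanishes precisely because of the congruence $m\lambda-m+2r+2=0$ (respectively $m\lambda-m+2l=0$) in $\mathbb{F}$; these monomials give rise to vector fields normalizing but lying outside $SKO(\underline{t})^{(1)}$, and I will read off $X_{i_1,\ldots,i_r}$ and $Y_{j_1,\ldots,j_l}$ as the corresponding outer derivation classes. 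The extra generator $f_1^{SKO}$ in $\overline{\mathfrak{G}}_1^{SKO}$ (case $\underline{t}=\underline{\infty}$) appears only when $m\lambda-m+2\equiv 0\pmod p$ or $\lambda=1$, and in $\mathfrak{G}_2^{SKO}$ (case $\underline{t}\neq\underline{\infty}$) only when $m\lambda\equiv -1\pmod p$, corresponding to the exceptional cases in which the comparison of $SKO^{(1)}$ and $SKO^{(2)}$ produces a new class. The Lie superbracket relations, in particular the pairing $[X_{i_1,\ldots,i_r},Y_{j_1,\ldots,j_l}]$, are then obtained by a direct computation with $[\;,\;]_{KO}$ combined with the divergence condition.

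The main obstacle will be the completeness statement for $SKO$: one must show that every outer class is exhausted by these explicit elements, which amounts to matching $\dim \mathrm{Nor}_{W(\underline{t})}SKO^{(i)}/\mathrm{ad}(SKO^{(i)})$ against $\dim V^{SKO}+1+\delta'_{m\lambda,-1}$, and this in turn comes down to classifying those $W$-monomials that normalize $SKO^{(i)}$ without belonging to it, a problem governed entirely by the solution sets $\mathfrak{S}_l(\lambda,m)$ of $m\lambda-m+2k+l\equiv 0\pmod p$. The $m=3$ subtlety for $SHO$ via $\Theta$ and the tracking of the $\mathbb{Z}_2$-grading through the semidirect sum structure are secondary but nontrivial issues; they are already isolated by Theorem~\ref{101123ctc1} and Lemma~\ref{101123ctt2.1} and need only be carried carefully into the quotient. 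The final \emph{Moreover} claims (which factors are Lie algebras rather than genuine Lie superalgebras, and which are abelian) then follow by inspection of the bracket relations just established.
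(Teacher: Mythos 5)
Your overall strategy is the paper's: apply Theorem \ref{101123ctc1} to get $\mathrm{Der}(L)$, then compute the quotient by $\mathrm{ad}(L)$ using Table 3.1 together with explicit decompositions along the derived series ($L/L^{(1)}$, $L^{(1)}/L^{(2)}$, $\mathrm{Nor}_{W}L/L$). The paper does exactly this, except that for $\underline{t}\neq\underline{\infty}$ and $m>3$ it simply cites \cite{BLL,Liu-Yuan-1} for the resulting bookkeeping, and for $\underline{t}=\underline{\infty}$ it records the decompositions $\overline{SHO}(\underline{\infty})=SHO^{(1)}\oplus\mathbb{F}x^{\omega}\oplus\mathbb{F}x_{1}x_{\tilde{1}}$ and $SKO(\underline{\infty})=SKO^{(1)}\oplus\mathbb{F}x^{\omega}$ (or $SKO^{(1)}$, according to the congruence on $\lambda$) before invoking ``a direct computation.'' Your $SKO$ analysis --- the $X$'s and $Y$'s as classes in $SKO/SKO^{(1)}$ governed by $\mathfrak{S}_{l}(\lambda,m)$, $f^{SKO}$ from $x_{1}x_{\tilde{1}}$, and $f_{1}^{SKO}$ appearing only in the exceptional congruence cases --- matches what these decompositions produce.

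However, your $SHO$ identifications are systematically off by one step in the derived series, and as written those steps would fail. Since $\mathrm{ad}$ of an element of $L$ is an inner derivation of $L$, a coset in $SHO/SHO^{(1)}$ contributes nothing to $\mathrm{Der_{out}}\big(SHO(\underline{t})\big)$; the $g_{i}^{SHO}$ must instead come from $\mathrm{Nor}_{W(\underline{t})}SHO(\underline{t})/SHO(\underline{t})=\big(\overline{SHO}(\underline{t})\oplus\mathbb{F}\mathfrak{D}\big)/SHO(\underline{t})$, i.e.\ from the classes of the elements $x^{(\pi_{i}\varepsilon_{i})}\partial_{\tilde{i}}$ (compare item (4) in the proof of Theorem \ref{ctt1102241908}). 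Likewise $\overline{\Lambda(m)}$ (resp.\ $f_{3}^{SHO}$ in the infinite case) accounts for $SHO/SHO^{(1)}$ --- inner for $SHO$ but newly outer for $SHO^{(1)}$ --- not for $SHO^{(1)}/SHO^{(2)}$; the dimension count $\dim\mathrm{Der_{out}}\big(SHO(\underline{t})^{(1)}\big)-\dim\mathrm{Der_{out}}\big(SHO(\underline{t})\big)=2^{m}=\dim SHO(\underline{t})/SHO(\underline{t})^{(1)}$ from Remark \ref{ctr110402} confirms this. Only $f_{4}^{SHO}$ (together with $f_{5}^{SHO}$, i.e.\ the class of $\Theta$, when $m=3$) arises from the passage $SHO^{(1)}\to SHO^{(2)}$. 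With these sources corrected, the rest of your plan goes through as in the paper.
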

 \begin{proof}
For  $\underline{t}\not=\underline{\infty}$, if $m>3$,
 the conclusions follow  directly from  \cite{BLL,Liu-Yuan-1}; if $X=SHO$ and  $m=3$, the conclusions hold from a simple computation. For
$\underline{t}=\underline{\infty}$,
 \begin{itemize}
\item[]
$\overline{SHO}(\underline{t})\cap\overline{S}(\underline{t})=SHO(\underline{t})\oplus \mathbb{F}x_1x_{\tilde{1}}$
\begin{itemize}
\item[]\quad\quad\quad\quad\;\;$=SHO(\underline{t})^{(1)}\oplus \mathbb{F}x^\omega\oplus\mathbb{F}x_1x_{\tilde{1}};$
\end{itemize}
\item[]
$SKO(\underline{t})= \left\{\begin{array}{ll}
SKO(\underline{t})^{(1)}\oplus \mathbb{F}x^\omega&m\lambda-m+2\equiv 0\pmod{p} \mbox{ or } \lambda=1;\\
SKO(\underline{t})^{(1)}&\mbox{otherwise}.
\end{array}\right.$
\end{itemize} From Table 3.1 and Theorem \ref{101123ctc1}, by a direct computation we have the desired results.
 \end{proof}

\begin{remark}\label{ctr1102271257}
In the  finite dimensional simple case   Theorems  \ref{ctt1102241908} and \ref{ctt1102242003} are known \cite{BLL,Liu-Yuan-1,Liu-Zhang-2,Liu-Zhang-Wang,Fu-Zhang-Jiang}.
\end{remark}
\begin{remark}\label{ctr110612rn2}
Note that there is an error in the formulation
of Theorem 2.11 in\cite{Hua-Liu}.
\end{remark}
 \begin{remark}\label{ctr110402}When $\underline{t}<\underline{\infty},$ denote  $\eta=\sum_{i=1}^mt_i$. The following dimension formulas hold:\\
\begin{tabular}{|l|l|l|l|}
   \multicolumn{4}{c}{$\rm{Dimensions \; of\; outer \; superderivations\; algebras\; of\; Cartan\; type}$}

    \\[5pt] \hline
     \multicolumn{1}{|c|}{\small$X$ }&
     \multicolumn{1}{|c|}{\small$\mathrm{\small\dim\Big(\small Der_{out}}\big(X(\underline{t})\big)\Big)$ }&
     \multicolumn{1}{|c|}{\small$\mathrm{\dim\Big(\small Der_{out}}\big(X(\underline{t})^{(1)}\big)\Big) $}&
     \multicolumn{1}{|c|}{\small$\mathrm{\dim\Big(\small Der_{out}}\big(X(\underline{t})^{(2)}\big)\Big)$}\\
     \hline
     {\small$W$}&
      $\small \eta-m$&
      $ $&
      $ $\\
     \hline
     {\small$S$}&
      $ \small\eta-m+1$&
      $\small\eta+1 $&
      $ $\\
      \hline
    {\small $H$}&
      $ \small\eta+1$&
      $\small\eta+2 $&
      $ $\\ \hline
    {\small $ K$}&
      $ \small\eta-m$&
      $\small\eta-m+\delta'_{n-m,3}$&
      $ $\\
       \hline
     \small$HO$&
      $ \small\eta+2$&
      $ $&
      $ $\\
      \hline
     \small$SHO$&
      $\small\eta+2$&
      $\small\eta+2^m+2 $&
      $ \small\eta+2^m+3+\delta_{m=3}$\\
     \hline
     \small$KO$&
      $ \eta-m$&
      $ $&
      $ $\\
      \hline
     \small$SKO$&
      $\small\eta-m+1$&
      $\small\eta-m+1+l(\lambda, m)$&
      $\small\eta-m+1+l(\lambda, m)+\delta'_{m\lambda, -1}$ \\
     \hline
   \end{tabular}
\end{remark}

\end{document}